\documentclass[10pt]{article}
\usepackage[utf8]{inputenc}
\usepackage{amsmath}
\usepackage{amssymb}
\usepackage{mathrsfs}
\usepackage{amsthm}
\usepackage{mathabx}
\usepackage{tikz-cd}
\usepackage{mathtools}
\usepackage{dsfont}
\usepackage[all]{xy}
\usepackage{stmaryrd}

%\begin{align*}
 %   Q_n(\bullet)&=\dfrac{|c_n|^2}{(q^{2n+2;q^2})_n}p_n(\bullet)\\
  %  c_n&=\dfrac{(\sqrt{-1})^{n}q^{-(n-a)n}q^{n^2/2}}{(q^2;q^2)^{1/2}_{2n}}
   % \qhyp 32{1,q^{-2n},q^{-2n+2a}}{q^{-4n}}{q^2,q^2}
    %\end{align*}
    %and $p_n$ is defined by
    %\begin{align*}
   %& p_n(\cos\theta; -q^{-2a+1}, -q^{2a+1}, q, q| q^2)\\
    %&=(-1)^{n}q^{n(-2a+1)}(q^2,-q^{-2a+2},-q^{-2a+2};q)_n\\
    %&\cdot\qhyp 43{q^{-n},q^4q^{n-1},-q^{-2a+1}e^{\sqrt{-1}\theta},-q^{-2a+1}e^{-\sqrt{-1}\theta}}{q^2,-q^{-2a+2},-q^{-2a+2}}{q,q}.
%\end{align*}
\newcommand{\qhyp}[5]{\,\mbox{}_{#1}\phi_{#2}\!\left(
  \genfrac{}{}{0pt}{}{#3}{#4};#5\right)}

\newcommand{\id}{\mathrm{id}}
\newcommand\restr[2]{{% we make the whole thing an ordinary symbol
  \left.\kern-\nulldelimiterspace % automatically resize the bar with \right
  #1 % the function
  \littletaller % pretend it's a little taller at normal size
  \right|_{#2} % this is the delimiter
  }}

\newcommand{\littletaller}{\mathchoice{\vphantom{\big|}}{}{}{}}
\theoremstyle{plain}
\newtheorem{thm}{Theorem}[section]
\newtheorem{lem}[thm]{Lemma}
\newtheorem{prop}[thm]{Proposition}

\theoremstyle{definition}

\newtheorem{defn}[thm]{Definition}

\theoremstyle{remark}
\newtheorem{Rem}[thm]{Remark}

\theoremstyle{plain}
\newtheorem*{thm*}{Theorem}
\newtheorem*{lem*}{Lemma}
\newtheorem*{prop*}{Proposition}
\newtheorem*{cor*}{Corollary}
\newtheorem*{conj*}{Conjecture}

\theoremstyle{definition}
\newtheorem*{ass*}{Assumption}
\newtheorem*{defn*}{Definition}

\theoremstyle{remark}
\newtheorem*{Rem*}{Remark}
\newcommand{\compconj}[1]{%
  \overline{#1}%
}
\makeatletter
\newcommand*{\rom}[1]{\expandafter\@slowromancap\romannumeral #1@}
\makeatother
\DeclareMathOperator{\End}{End}

\DeclareMathOperator{\Hom}{Hom}
\DeclareMathOperator{\spn}{span}
\DeclareMathOperator{\Ima}{Im}

\DeclareMathOperator{\supp}{supp}

\title{A proper $1$-cocycle on a Podle\'{s} sphere}
\author{Masato Tanaka
\footnote{%
tanakamasato.2121@gmail.com, masato.tanaka.c7@math.nagoya-u.ac.jp%
}}
\date{November 2022}

\begin{document}

\maketitle
\begin{abstract}
We define $1$-cocycles on coideal $*$-subalgebras of CQG Hopf $\ast$-algebras and consider the condition for $1$-cocycles to extend to $1$-cocycles on Drinfeld double coideals. We construct a $1$-cocycle on a Podle\'{s} sphere, which extends to a $1$-cocycle on $SL_q(2,\mathbb{R})$ and prove that it is proper.
\end{abstract}
\section{Introduction}
%The group $ SL(2,\mathbb{R})$ has been studied in years in great depth by a lot of mathematicians and appears in many places. For example, the group $SL(2,\mathbb{R})$ appears in the theory of automorphic forms, physics and representation theory etc. (cf.\ \cite{Lang})%
Let us review a brief history of the representation theory of the special linear group $SL(2,\mathbb{R})$. The fundamental works were worked out by I.\ Gelfand and M.\ Naimark \cite{GN} in 1946, V.\ Bargmann \cite{Barg} in 1947 and Harish-Chandra \cite{H-C} in 1952. In particular, all the unitary irreducible representations of $SL(2,\mathbb{R})$ were classified. By their works, we can understand the unitary dual of the special linear group $SL(2,\mathbb{R})$. For the concise account of representation theory and the unitary dual of $SL(2,\mathbb{R})$, see \cite[Chapter \rom{6}, Section 6]{Lang} .\par
%kokoninanikakakitai.
In 2021, K.\ De Commer and J.\ R.\ Dzokou Talla constructed a quatization of the group $SL(2,\mathbb{R})$ as the Drinfeld double coideal of the Drinfeld double $\mathcal{O}_q(SU(2))\bowtie U_q(\mathfrak{su}(2))$ \cite{DCDz1}. To construct $SL_q(2,\mathbb{R})$ (the quantum $SL(2,\mathbb{R})$) they constructed $U_q(\mathfrak{sl}(2,\mathbb{R})_t)$ which is a deformation of the enveloping algebra of the special linear group $SL(2,\mathbb{R})$. They classified the irreducible unitary representations of  $U_q(\mathfrak{sl}(2,\mathbb{R})_t)$. The unitary dual of $SL_q(2,\mathbb{R})$ is almost the same as the unitary dual of $SL(2,\mathbb{R})$. If one has the precise information on the unitary representations, then one can understand many properties of (quantum) groups. For example, one can construct $1$-cocycle and understand many properties such as the property (T), the Haagerup property et cetera via the representation theory. In the present paper we consider $1$-cocycles.\par
For the case of groups, (concrete constructions of) $1$-cocycles have played important roles in many areas such as L\'{e}vy process on groups \cite{FGT}, \cite{Mic}. One can characterize the property (T) and the Haagerup property et cetera via $1$-cocycles \cite{BHV}. There are many other problems related to $1$-cocycles \cite{FGT}, \cite{Nishi}, \cite{pet}, \cite{pop}, \cite{maximal}. For the case of quantum groups, $1$-cocycles are considered in \cite{DFK}, \cite{DFSW}, \cite{Hunt1}, \cite{Mic}, \cite{SS}, \cite{Hunt2} et cetera. We construct a $1$-cocycle on $SL_q(2,\mathbb{R})$ based on \cite{DCDz1} and \cite{DCDz2} and compute its growth. Our main theorems are the following:
\begin{enumerate}
\item We construct a nontrivial Yetter--Drinfeld 1-cocycle $C$ of the Podle\'{s} sphere $\mathcal{O}_q(S_t^2)$ on the discrete series representation $\mathcal{D}_2^+ \oplus \mathcal{D}_2^-$. 
\item The growth of the 1-cocycle $C$ is given by
\[C^{n*} C^n = \dfrac{q+q^{-1}+(q+q^{-1})^{-1}\llbracket a \rrbracket^2\{a+2\}}{\{a\}^2\{a+1\}}
\operatorname{Re}(Q'_n ((q+q^{-1})/2))
,\]
where $Q_n$ is a one-parameter specialization of the Askey--Wilson polynomial $p_n$. Here,
\begin{align*}
    Q_n(\bullet)=p_n((q+q^{-1})/2)^{-1}p_n(\bullet),
    \end{align*}
    and $p_n$ is defined by
    \begin{align*}
   & p_n(\cos\theta; -q^{-2a+1}, -q^{2a+1}, q, q| q^2)\\
    &=(-1)^{n}q^{n(-2a+1)}(q^2,-q^{-2a+2},-q^{-2a+2};q)_n\\
    &\cdot\qhyp 43{q^{-n},q^{n+3},-q^{-2a+1}e^{\sqrt{-1}\theta},-q^{-2a+1}e^{-\sqrt{-1}\theta}}{q^2,-q^{-2a+2},-q^{-2a+2}}{q,q}.
\end{align*}
\item In particular $C$ is proper, meaning that $C^{n*}C^n\to\infty$.
% \item Any $1$-cocycle satisfying the Yetter-Drinfeld condition extends to a $1$-cocycle on the Drinfeld double coideal (Theorem \ref{mainext}).
%\item We can construct a non-trivial $1$-cocycle $C$ on the Podle\'{s} sphere (Theorem \ref{construction1}). 
%\item We can extend $C$ on the Drinfeld double coideal $\mathcal{O}_q(S_t^2)\bowtie\mathcal{I}_c$ (Theorem \ref{construction2}).
%\item The representation to which our $1$-cocycles are associated is $\mathcal{D}_2^+\oplus\mathcal{D}_2^-$ (Theorem \ref{construction1},Theorem \ref{construction2}).
%\item We can compute the growth of our $1$-cocycle $C$ and show that $C$ is proper (Theorem \ref{growth}, Theorem \ref{proper}).
\end{enumerate}
Acknowledgements: The author wishes to express his thanks to his supervisor Prof. Y. Arano for many helpful suggestions. He is greatly indebted to Prof. S. Yanagida for pointing out mistakes in the first draft of the paper. He also gratefully acknowledges many encouragement of Prof. Y. Ueda during the preparation of the paper. Without their advice and warm encouragement, this thesis could not be written. This work is supported by the “Nagoya University 
Interdisciplinary Frontier Fellowship” supported by Nagoya University and JST, the establishment of university fellowships towards the creation of science technology innovation, 
Grant Number JPMJFS2120.

\section{Preliminaries}  The best general references here are \cite{DCDz1}, \cite{DCDz2}, \cite{NT}, and \cite{KS}. 
\subsection{Notations and conventions}
\begin{itemize}
\item The vector spaces and algebras which we consider are the complex vector spaces unless otherwise stated.
\item A preHilbert space is written in a calligraphic font. Its completion is written in a block letter. For example we denote the completion of a preHilbert space $\mathcal{H}$ by $H$ 
\item The inner products are conjugate linear in the first variable and linear in the second variable.
\item Let $\mathcal{H}$ be a preHilbert space. Let  $\End({\mathcal{H}})$ denote the space of all adjointable linear maps on $\mathcal{H}$.
\item Let $H$ be a Hilbert space. We define $\mathbb{B}(H)$ to be the set of bounded operators on $H$.
\item Let $\mathcal{H}$ be a preHilbert space. For $\xi,\eta\in\mathcal{H}$ we define a map $\xi\eta^*$ by  $\xi\eta^*(\zeta)=\langle\eta,\zeta\rangle\xi\quad(\zeta\in\mathcal{H})$
\item We denote the spectrum of an element $a$ in a Banach algebra by $\sigma(a)$.
\item In this thesis $q$ is a real number in the interval $(0,1)$.
\item For each $a\in\mathbb{R}$, we put $[a]\coloneqq\dfrac{q^a-q^{-a}}{q-q^{-1}}$, $\llbracket a \rrbracket \coloneqq q^a-q^{-a}$, $\{a\}\coloneqq q^a+q^{-a}$.
\item We use the $q$-Pochhammer symbols:
\begin{align*}
 (b;q)_\infty &\coloneqq\prod_{i=0}^\infty (1-b q^i),\\
 (b;q)_n&\coloneqq(b;q)_\infty / (b q^n;q)_\infty ,\\
(b_1,b_2,\ldots,b_r;q)_n&\coloneqq(b_1;q)_n(b_2;q)_n\cdots(b_r;q)_n.
\end{align*}
\item We also use $q$-hypergeometric series:
\begin{align*}
   \qhyp {s+1}s{a_1,\ldots,a_{s+1}}{b_1,\ldots, b_s}{q,z}\coloneqq\displaystyle\sum_{i=0}^{\infty}\dfrac{(a_1,a_2,\ldots,a_{s+1};q)_iz^i}{(b_1,b_2,\ldots,b_s;q)_i(q;q)_i}_{.}
\end{align*}
\end{itemize}
\subsection{Drinfeld doubles}
In this subsection we define unitary pairings, Drinfeld doubles and Drinfeld double coideals.
\begin{defn}
Let $A$ and $U$ be Hppf $\ast$-algebras. We call the linear map $(-,-) \colon A\otimes U \to \mathbb{C} $ a \emph{unitary pairing} if the following conditions hold: $(\Delta(a), h\otimes k)=(a,hk)$, $(a\otimes b, \Delta(h))$, $(1,h)=\epsilon(h)$, $(a,1)=\epsilon(a)$, $(a^\ast, h)=\compconj{(a, S(h)^\ast)}$ and $(a, h^\ast)=\compconj{(S(a)^\ast, h)}$ for all $a,b\in A$ and $h,k\in U$. We define the operations $\lhd$ and $\rhd$ as follows.
\begin{align*}
h\rhd a&\coloneqq a_{(1)}(a_{(2)}, h)=(\id\otimes (\bullet, h))\Delta(a)\\
a\lhd h&\coloneqq (a_{(1)}, h)a_{(2)}=((\bullet, h)\otimes\id)\Delta(a)\\
a\rhd h&\coloneqq h_{(1)}(a, h_{(2)})=(\id\otimes (a,\bullet))\Delta(h)\\
h\lhd a&\coloneqq (a, h_{(1)})h_{(2)}=((a, \bullet)\otimes\id)\Delta(h)
\end{align*}
\end{defn}
\begin{defn}
Let $A$ and $U$ be Hopf $\ast$-algebras with a unitary pairing. We define the \emph{Drinfeld double} $A\bowtie U$ of $A$ and $U$ to be the $\ast$-algebra generated by $A$ and $U$ with the commuting relations
\begin{align*}
&ha=a_{(2)}(S^{-1}(a_{(1)})\rhd h \lhd a_{(3)})=(a_{(3)},  h_{(1)})a_{(2)}h_{(2)}(a_{(1)},  S^{-1}(h_{(3)}))\\
&ah=h_{(2)}(S^{-1}(h_{(1)})\rhd a \lhd h_{(3)})=(S^{-1}(a_{(3)}), h_{(1)})h_{(2)}a_{(2)}(a_{(1)}, h_{(3)})
\end{align*}
for each $a\in A$ and $h\in U$.
\end{defn}
\begin{defn}
Let $A$ and $U$ be as above. Let $I\subset U$ be a unital right coideal $\ast$-subalgebra and let $J\subset I^{\perp}\subset A$ be a unital left coideal $\ast$-subalgebra, where $I^{\perp}\coloneqq\{a\in A\mid h\rhd a=\epsilon(h)a\hspace{1mm}\text{for all}\hspace{1mm}h\in I\}$. Then we define the \emph{Drinfeld double coideal} $J\bowtie I$ of $I$ and $J$ to be the $\ast$-subalgebra of $A\bowtie U$ spanned by the elements $by$, where $b\in J$ and $y\in I$. This is a unital left coideal $\ast$-subalgebra of $A\bowtie U$.
\end{defn}
\begin{Rem}
Notice that the commuting relations in the Drinfeld double coideal $J\bowtie I$ can be simplified:
\begin{align*}
 &by=y_{(1)}(b\lhd y_{(2)}), yb=(b\lhd S^{-1}(y_{(2)}))y_{(1)}\quad(b\in J, y\in I)
 \end{align*}
\end{Rem}
\subsection{The Hopf $\ast$-algebras $\mathcal{O}_q(SU(2))$ and $U_q(\mathfrak{su}(2))$}\label{exmpofhopf}
In this subsection, we introduce the Hopf $\ast$-algebras {$\mathcal{O}_q(SU(2))$ and $U_q(\mathfrak{su}(2))$. We omit the proofs of all the statements in the present subsection. See \cite{NT} and \cite{KS} for more details.
\begin{defn}The algebra
$\mathcal{O}_q(SU(2))$ is the unital associative algebra generated by the elements $\alpha, \gamma$ with the relations\par
$\alpha^{\ast}\alpha+\gamma^{\ast}\gamma=1, \alpha\alpha^{\ast}+q^2\gamma\gamma^{\ast}=1, \gamma^{\ast}\gamma=\gamma\gamma^{\ast}, \alpha\gamma=q\gamma\alpha, \alpha\gamma^{\ast}=q\gamma^{\ast}\alpha$.\\
This relations means that $U_{1/2}=[u_{i,j}]_{i,j=1/2,-1/2}\coloneqq\begin{bmatrix}
\alpha & -q\gamma^{\ast} \\
\gamma & \alpha^{\ast} \\
\end{bmatrix}$ is unitary.\\
The coproduct is defined by $\Delta(u_{i,j})=\sum_{k}u_{i,k}\otimes u_{k,j}$.\\
We define the counit $\epsilon$ and the antipode $S$ by the formulas $\epsilon(\alpha)=1$, $\epsilon(\gamma)=0$, $S(\alpha)=\alpha^{\ast}, S(\alpha^{\ast})=\alpha, S(\gamma)=-q\gamma, S(\gamma^{\ast})=-q^{-1}\gamma^{\ast}$.
Note that $\epsilon$ and $S$ extends to $\mathcal{O}_{q}(SU(2))$.
\end{defn}

Next, we consider the algebra $U_q(\mathfrak{su}(2))$, which is \emph{dual} to $\mathcal{O}_q(SU(2))$ as a Hopf $\ast$-algebra.
\begin{defn}
We define $U_q(\mathfrak{su}(2))$ to be the unital associative algebra generated by the elements $k, k^{-1}, e$ and $f$ with the relations
\begin{align*}
    kk^{-1}=k^{-1}k=1,\quad ke=q^2ek,\quad kf=q^{-2}fk,\quad [e,f]=\dfrac{k-k^{-1}}{q-q^{-1}}.
\end{align*}
The Hopf $\ast$-algebra structure is given by the following:
\begin{align*}
 &\Delta(k^{\pm})=k^{\pm}\otimes k^{\pm},\quad\Delta(e)=e\otimes 1+k\otimes e,\quad\Delta(f)=1\otimes f+f\otimes k^{-1}\\
 %\end{align*}
 %\begin{align*}
    &\qquad\qquad\qquad\qquad\epsilon(k^{\pm})=1,\quad\epsilon(e)=0,\quad\epsilon(f)=0\\
    &\qquad\quad\quad\quad S(k^{\pm})=k^{\mp},\quad S(e)=-k^{-1}e,\quad S(f)=-fk\\
    &\qquad\qquad\qquad\qquad
    k^*=k,\quad e^*=fk,\quad f^*=k^{-1}e
 \end{align*}
 
\end{defn}
For each non-negative half-integer $s\in\dfrac{1}{2}\mathbb{Z}_{\geq 0}$, we take the $(2s+1)$-dimensional irreducible corepresentation $(H_s, U_s)$ of $SU_q(2)$ and fix the orthonormal basis $(\xi_{i}^s)_{i=-s,\ldots s}$, where the vector $\xi_i^s$ is of weight $s$. We denote each matrix coefficient of the corepresentation $U_s$ by $u^s_{\xi,\eta}\quad(\xi,\eta\in H_s)$. We denote the matrix coefficients $u_{\xi_i^s, \xi_j^s}^s$ by $u^s_{ij}$ for short. \par
We note that if $x\in U_q(\mathfrak{su}(2))$ is given, then we have $x\rhd u^s_{\xi,\eta}=u^s_{\xi,x\eta}$ and $u^s_{\xi, \eta}\lhd x=u^s_{x^{^{\ast}}\xi, \eta}$.
%We define the map $R\colon U_q(\mathfrak{su}(2))\rightarrow&U_q(\mathfrak{su}(2))$ by requiring that
%\begin{align*}
 %   R(k)=k^{-1},\quad R(e)=-qk^{-1}e,\quad R(f)=-q^{-1}fk.
%\end{align*}
%We may write this map formally as $R(\bullet)\coloneqq k^{1/2}S(\bullet)k^{-1/2}$. This is a $\ast$-preserving anti-multiplicative linear map. Note that $\Delta\circ R=(R\otimes R)\Delta^{\op}$.
%\begin{prop}[Representations parametrized by half-integers]
%For each $s\in\tfrac{1}{2}\mathbb{Z}_{\geq0}$, there exist a (2s+1)-dimensional Hilbert space, an orthonormal basis $(\xi_i^s)_{i=-s,\ldots,s}\subset H_s$ and a unitary $\ast$-representation $\pi_s\colon U_q(\mathfrak{su}(2))\rightarrow&\mathbb{B}(H_s)$ such that 
%\begin{align*}
 %   \pi_s(k)\xi_i^s&=q^{2i}\xi_i^s\\
  %  \pi_s(e)\xi_i^s&=q^{i+1}([s-i][s+i+1])^{1/2}\xi_{i+1}^s\\
  %  \pi_s(f)\xi_i^s&=q^{-i}([s+i][s-i+1])^{1/2}\xi^s_{i-1}.
%\end{align*}
%\end{prop}
Recall that the unitary pairing between the Hopf $\ast$-algebras $\mathcal{O}_q(SU(2))$ and $U_q(\mathfrak{su}(2))$ is given by
\begin{align*}
    (U_{1/2}, k)&=\begin{bmatrix}
q & 0 \\
0 & q^{-1} \\
\end{bmatrix},\\
(U_{1/2}, e)&=\begin{bmatrix}
0 & q^{1/2} \\
0 & 0 \\
\end{bmatrix},\\
(U_{1/2}, f)&=\begin{bmatrix}
0 & 0 \\
q^{-1/2} & 0 \\
\end{bmatrix}_{.}
\end{align*}
By this pairing, we consider that $\mathcal{O}_q(SU(2))$ is dual to $U_q(\mathfrak{su}(2))$ and vice versa.
\subsection{The Drinfeld double $U_q(\mathfrak{sl}(2,\mathbb{R})_t)$ and its representation theory}\label{rep}
Let $a\in\mathbb{R}$ and put $t=\llbracket a \rrbracket$. Define the elements $B_t, \Tilde{B}_t$ in $U_q(\mathfrak{su}(2))$ by
\begin{align*}
    B_t&\coloneqq -q^{1/2}k^{-1}e+q^{1/2}f-\sqrt{-1}(q-q^{-1})^{-1}tk^{-1}\\
    \Tilde{B}_t&\coloneqq B_t+\sqrt{-1}(q-q^{-1})^{-1}t.
\end{align*}
By \cite[Theorem 2.1]{DCDz1}\cite[Theorem 4.3]{K}, for each half-integer $s\in\tfrac{1}{2}\mathbb{Z}_{\geq0}$, we have 
\begin{align*}
\sigma(\pi_s(\sqrt{-1}B_t))
=\{[a+2i]\mid i=-s,-s+1,\ldots, s\},
\end{align*}
and $\pi_s(\sqrt{-1}B_t)$ is a diagonal operator with respect some orthonormal basis $\{\eta^s_{[a+2i]}\}_{i=-s,\ldots,s}$ for $H_s$ consisting of eigenvectors, where $\pi_s(\sqrt{-1}B_t)\eta^s_{[a+2i]}=[a+2i]\eta^s_{[a+2i]}$. We fix an orthonormal basis $\{\eta^s_{[a+2i]}\}_{i=-s,\ldots,s}$.
Put $I\coloneqq\mathbb{C}[B_t]$ and $B\coloneqq I^{\perp}$. We call $B$ a \emph{Podle\'{s} sphere} and denote it by $\mathcal{O}_q(S_t^2)$. $I$ is a right coideal and $\mathcal{O}_q(S_t^2)$ is a left coideal. We define $U_q(\mathfrak{sl}(2,\mathbb{R})_t)\coloneqq \mathcal{O}_q(S_t^2)\bowtie I$. This is a unital left coideal $\ast$-subalgebra of the Drinfeld double of $\mathcal{O}_q(SU(2))$.
\begin{Rem}
 This $B_t$ is different from the element $B_t$ in \cite{DCDz1}. Applying the unitary antipode to $B_t$ in \cite{DCDz1}, we get the element $B_t$ in the present thesis. Since we define the growth of the cocycle $C$ by $C^*C$, the coideal needs to be a right coideal. This is why we changed the notation.
\end{Rem}
Let us recall the representation theory of $U_q(\mathfrak{sl}(2,\mathbb{R})_t)$ as in \cite{DCDz1}.
\begin{defn}\cite[Definition 3.1]{DCDz1}
We say $U_q(\mathfrak{sl}(2,\mathbb{R})_t)$-module $(\mathcal{H},\pi)$ is an \emph{$SL(2,\mathbb{R})_t$-admissible $\ast$-representation} if the following three conditions are satisfied:
\begin{itemize}
    \item $\mathcal{H}$ is a preHilbert space
    \item $\pi(\sqrt{-1}B_t)$ is diagonalizable with finite dimensional eigenspaces and each eigenvalue of $\pi(\sqrt{-1}B_t)$ is of the form $[c]$, where $c\in a+\mathbb{Z}$
    \item We have $\langle\xi, \pi(x)\eta\rangle=\langle \pi(x^{\ast})\xi, \eta\rangle$
    for all $\xi,\eta\in\mathcal{H}$ and $x\in U_q(\mathfrak{sl}(2,\mathbb{R})_t)$.
\end{itemize}
\end{defn}
%By the remark before Proposition \ref{eigvals}, the information on eigenvalues for the element $\sqrt{-1}R(B_t)$ is the same as that of $\sqrt{-1}B_t$. The representation theory of our $U_q(\mathfrak{sl}(2,\mathbb{R})_t)$ is the same that of $U_q(\mathfrak{sl}(2,\mathbb{R})_t)$ in \cite{DCDz1}. \\

For each $c\in\mathbb{R}$ define the vectors in $H_1$ as follows:
\begin{align*}
    \xi^{(\llbracket c \rrbracket;1)}&\coloneqq q^{1/2}\xi^{1}_{1}-(\sqrt{-1}\llbracket c \rrbracket/\sqrt{q+q^{-1}})\xi^{1}_0+q^{-1/2}\xi^1_{-1},\\ 
    \xi_{+}^{(\llbracket c \rrbracket;1)}&\coloneqq -q^{-c-1/2}\xi^{1}_{1}+(\sqrt{-1}\sqrt{q+q^{-1}})\xi^{1}_0+q^{c+1/2}\xi^1_{-1},\\
    \xi_{-}^{(\llbracket c \rrbracket;1)}&\coloneqq  q^{c-1/2}\xi^{1}_{1}+(\sqrt{-1}\sqrt{q+q^{-1}})\xi^{1}_0-q^{-c+1/2}\xi^1_{-1}.
\end{align*}
Then the elements
\begin{align*}
    T^{+}_c&\coloneqq u^1_{\xi_{+}^{(\llbracket c \rrbracket;1)}, \xi^{(t;1)}}\\
    A_c&\coloneqq u^1_{\xi^{(\llbracket c \rrbracket;1)}, \xi^{(t;1)}}-(q^{-1}+q)^{-1}\llbracket c \rrbracket t\\
    T^{-}_c&\coloneqq u^1_{\xi_{-}^{(\llbracket c \rrbracket;1)}, \xi^{(t;1)}}
\end{align*}
satisfy the following two propositions(\cite[Proposition 3.4, Lemma 3.7]{DCDz1}):
\begin{prop}
Let $\mathcal{H}=(\mathcal{H}, \pi)$ be a $U_q(\mathfrak{sl}(2,\mathbb{R})_t)$-module and let $\mathcal{H}(c)\coloneqq\{\xi\in\mathcal{H}\mid\sqrt{-1}\pi(B_t)\xi=[c]\xi\}$. Then 
\begin{align*}
    T_c^+\mathcal{H}(c)&\subset\mathcal{H}(c+2)\\
    A_c\mathcal{H}(c)&\subset\mathcal{H}(c)\\
    T_c^-\mathcal{H}(c)&\subset\mathcal{H}(c-2)
\end{align*}
\end{prop}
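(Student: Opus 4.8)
The three inclusions are each equivalent to a single commutation statement: for $\xi\in\mathcal{H}(c)$ one must check that $\sqrt{-1}\pi(B_t)$ carries $T_c^+\xi$, $A_c\xi$, $T_c^-\xi$ to $[c+2]$, $[c]$, $[c-2]$ times the respective vectors. Since $A_c$ differs from the matrix coefficient $u^1_{\xi^{(\llbracket c\rrbracket;1)},\xi^{(t;1)}}$ only by the scalar $(q^{-1}+q)^{-1}\llbracket c\rrbracket t$, and scalars preserve every eigenspace $\mathcal{H}(c)$, this correction is irrelevant to the inclusion $A_c\mathcal{H}(c)\subset\mathcal{H}(c)$; hence all three reductions concern only the coefficients $u^1_{v,\xi^{(t;1)}}$ with $v\in\{\xi_+^{(\llbracket c\rrbracket;1)},\xi^{(\llbracket c\rrbracket;1)},\xi_-^{(\llbracket c\rrbracket;1)}\}$. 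I would first record that $\xi^{(t;1)}$ is the eigenvector of $\pi_1(\sqrt{-1}B_t)$ with eigenvalue $[a]=\epsilon(\sqrt{-1}B_t)$, equivalently $\pi_1(B_t)\xi^{(t;1)}=\epsilon(B_t)\xi^{(t;1)}$. This is exactly the condition $B_t\rhd u^1_{v,\xi^{(t;1)}}=\epsilon(B_t)u^1_{v,\xi^{(t;1)}}$ that places every $u^1_{v,\xi^{(t;1)}}$ in $I^\perp=\mathcal{O}_q(S_t^2)$, so these coefficients genuinely lie in the coideal and act on $\mathcal{H}$.

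Next, inside $\mathcal{O}_q(S_t^2)\bowtie I$ I would move $B_t$ across such a coefficient using the simplified relation of the Remark, $B_t\,u^1_{v,\xi^{(t;1)}}=\bigl(u^1_{v,\xi^{(t;1)}}\lhd S^{-1}((B_t)_{(2)})\bigr)(B_t)_{(1)}$, together with the coproduct
\[
\Delta(B_t)=B_t\otimes k^{-1}+1\otimes(-q^{1/2}k^{-1}e+q^{1/2}f)
\]
and the rule $u^1_{\xi,\eta}\lhd x=u^1_{x^*\xi,\eta}$, which acts only on the first leg. Because $(B_t)_{(1)}\in\{B_t,1\}$ and $\sqrt{-1}\pi(B_t)$ acts on $\mathcal{H}(c)$ by the scalar $[c]$, evaluation on $\xi\in\mathcal{H}(c)$ collapses the right-hand side to $\pi\bigl(u^1_{L_c v,\,\xi^{(t;1)}}\bigr)\xi$, where $L_c$ is an explicit linear operator on the three-dimensional space $H_1$ assembled from $\pi_1(e),\pi_1(f),\pi_1(k)$ and the scalar $[c]$, normalised so that $\sqrt{-1}\pi(B_t)\,\pi(u^1_{v,\xi^{(t;1)}})\xi=\pi\bigl(u^1_{L_c v,\,\xi^{(t;1)}}\bigr)\xi$. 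Since the coefficients $u^1_{w,\xi^{(t;1)}}$ are linearly independent in $w$, the three inclusions become the eigenvalue assertions
\[
L_c\,\xi_+^{(\llbracket c\rrbracket;1)}=[c+2]\,\xi_+^{(\llbracket c\rrbracket;1)},\quad L_c\,\xi^{(\llbracket c\rrbracket;1)}=[c]\,\xi^{(\llbracket c\rrbracket;1)},\quad L_c\,\xi_-^{(\llbracket c\rrbracket;1)}=[c-2]\,\xi_-^{(\llbracket c\rrbracket;1)},
\]
so that $\xi_+^{(\llbracket c\rrbracket;1)},\xi^{(\llbracket c\rrbracket;1)},\xi_-^{(\llbracket c\rrbracket;1)}$ form an eigenbasis of $L_c$ realising the shifts $+2,0,-2$.

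Finally I would verify these three equations by direct computation in $H_1$, substituting the explicit matrices of $e,f,k$ on $\xi^1_1,\xi^1_0,\xi^1_{-1}$ and the coordinate expressions for $\xi_\pm^{(\llbracket c\rrbracket;1)},\xi^{(\llbracket c\rrbracket;1)}$. I expect the main obstacle to be precisely this last step, in two respects. First, deriving $L_c$ correctly requires threading the non-cocommutative coproduct, the antipode $S^{-1}$, and the $*$-operation through the relation, and a delicate cancellation must occur: the terms proportional to $t=\llbracket a\rrbracket$ coming from $S^{-1}(B_t)$ and from the inhomogeneous leg of $\Delta(B_t)$ have to cancel, so that $L_c$ depends on $c$ alone — as it must, since the target vectors and the eigenvalues $[c\pm2],[c]$ are independent of $a$. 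Second, once $L_c$ is in hand the three eigenvalue identities reduce to $q$-number manipulations relating the coefficients $q^{\pm c},q^{\pm(c+1/2)}$ in the definitions of $\xi_\pm^{(\llbracket c\rrbracket;1)}$ to $[c\pm2]$ via identities such as $[c]\{1\}\pm\llbracket 1\rrbracket=[c\pm1]\{?\}$; the work is bookkeeping of these cancellations rather than any conceptual difficulty.
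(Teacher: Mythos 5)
First, a point of comparison: the paper itself offers no proof of this proposition --- it is imported verbatim from \cite[Proposition 3.4, Lemma 3.7]{DCDz1} --- so your proposal can only be measured against the method of that source, and in outline it does match it. Your reductions are sound: the scalar $(q^{-1}+q)^{-1}\llbracket c\rrbracket t$ in $A_c$ is irrelevant; the identity $\pi_1(\sqrt{-1}B_t)\xi^{(t;1)}=[a]\xi^{(t;1)}=\epsilon(\sqrt{-1}B_t)\xi^{(t;1)}$ is exactly what places the coefficients $u^1_{v,\xi^{(t;1)}}$ in $I^{\perp}=\mathcal{O}_q(S_t^2)$; the coproduct $\Delta(B_t)=B_t\otimes k^{-1}+1\otimes X$ with $X=-q^{1/2}k^{-1}e+q^{1/2}f$ is correct; and the relation $B_t\,b=(b\lhd k)B_t+b\lhd S^{-1}(X)$ does collapse on $\mathcal{H}(c)$ to an identity of the form $\sqrt{-1}\pi(B_t)\pi(u^1_{v,\xi^{(t;1)}})\xi=\pi(u^1_{L_cv,\xi^{(t;1)}})\xi$, so the three inclusions follow from three eigenvalue equations in the three-dimensional space $H_1$. (Only this ``if'' direction is needed, so your linear-independence remark is superfluous, though harmless.)

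The gap is that these three eigenvalue equations --- which are the entire content of the proposition; everything preceding them is formal rearrangement --- are never verified, only promised, and they are not the routine bookkeeping you suggest. Two concrete difficulties. (i) Since $w\mapsto u^1_{w,\eta}$ is conjugate-linear in $w$ (this is what makes the paper's rule $u^1_{\xi,\eta}\lhd x=u^1_{x^*\xi,\eta}$ consistent with the pairing), scalars pass through conjugated: one finds $L_c=[c]\pi_1(k)-\sqrt{-1}\,\pi_1\bigl((S^{-1}(X))^*\bigr)$ with $(S^{-1}(X))^*=q^{1/2}fk-q^{-3/2}e$, and the sign in front of $\sqrt{-1}$ is precisely the kind of datum your sketch leaves undetermined, yet it changes the outcome. (ii) The verification depends on phase conventions for $\pi_1(e),\pi_1(f)$ on $H_1$ that the paper never records; they are in fact forced by the requirement that $\xi^{(t;1)}$ be the $[a]$-eigenvector (one is driven to $e\xi^1_0=-q\sqrt{q+q^{-1}}\,\xi^1_1$, etc., not the customary positive normalization), and with the ``obvious'' positive phases the whole setup already fails at the membership step. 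A further caution: the paper's $B_t$ differs from that of \cite{DCDz1} by a unitary antipode, so conventions cannot be borrowed blindly from the source either. Finally, the cancellation you anticipate between ``$t$-terms from $S^{-1}(B_t)$ and the inhomogeneous leg'' is vacuous: $S^{-1}(B_t)$ never enters the computation, only $S^{-1}(k^{-1})=k$ and $S^{-1}(X)$ do, and neither second leg of $\Delta(B_t)$ contains $t$, so $L_c$ is $t$-free automatically --- a sign that this step was described rather than attempted. As it stands, the proposal is a correct reduction plus a promissory note for the only step that carries the proof.
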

\begin{prop} We have 
\begin{align*}
    T^-_{c+2}T^+_{c}&=(\{c-a+1\}-A_c)(\{c+a+1\}+A_c),\\
    T^+_{c-2}T_c^-&=(\{c-a-1\}-A_c)(\{c+a-1\}+A_c),\\
    A_{c+2}T^+_c&=T^+_{c}A_c\quad\text{and}\quad A_{c-2}T_c^-=T_c^-A_c.
    \end{align*}
\end{prop}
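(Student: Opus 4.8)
The plan is to treat each displayed relation as an identity in $\mathcal{O}_q(SU(2))$ between products of matrix coefficients of the spin-$1$ corepresentation, expand every product through the fusion rule $U_1\otimes U_1\cong U_2\oplus U_1\oplus U_0$, and then compare isotypic components. Fix isometric intertwiners $w_r\colon H_r\to H_1\otimes H_1$ $(r=0,1,2)$ with $\sum_r w_rw_r^{\ast}=\id$. Multiplicativity of matrix coefficients then yields, for all $v,v',\zeta,\zeta'\in H_1$,
\[
u^1_{v,\zeta}\,u^1_{v',\zeta'}=\sum_{r=0}^{2}u^{r}_{\,w_r^{\ast}(v\otimes v'),\,w_r^{\ast}(\zeta\otimes\zeta')}.
\]
Because matrix coefficients of inequivalent irreducibles are linearly independent, and because $u^r_{v,\zeta}=u^r_{v',\zeta'}$ holds precisely when the rank-one operators satisfy $v\zeta^{\ast}=v'\zeta'^{\ast}$, the proposition decomposes into one identity in each of $H_2$, $H_1$, $H_0$.

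\emph{Reduction to vector identities.} The decisive feature is that the \emph{second} index of $T^{\pm}_c$ and $A_c$ is always $\xi^{(t;1)}$, and that $A_c$ differs from $M_c\coloneqq u^1_{\xi^{(\llbracket c\rrbracket;1)},\xi^{(t;1)}}$ only by a scalar. Setting $\gamma_r\coloneqq w_r^{\ast}(\xi^{(t;1)}\otimes\xi^{(t;1)})$, I would write the right-hand side as $(\{c-a+1\}-A_c)(\{c+a+1\}+A_c)=-M_c^2+\kappa_c M_c+\lambda_c$ for explicit scalars $\kappa_c,\lambda_c$, expand $M_c^2$ and $T^-_{c+2}T^+_c$ by the formula above, and match components. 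In the $H_2$ and $H_0$ channels both sides carry the \emph{same} column vector $\gamma_r$ (for $r=0$ there is in addition the constant $\lambda_c\cdot 1$), so these channels collapse to a single vector identity $w_2^{\ast}(\xi_-^{(\llbracket c+2\rrbracket;1)}\otimes\xi_+^{(\llbracket c\rrbracket;1)})=-\,w_2^{\ast}(\xi^{(\llbracket c\rrbracket;1)}\otimes\xi^{(\llbracket c\rrbracket;1)})$ in $H_2$ and to one scalar identity in $H_0$.

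\emph{The spin-$1$ channel.} The subtle channel is $r=1$, where the quadratic terms carry the column $\gamma_1$ while the linear term $\kappa_c M_c$ carries the column $\xi^{(t;1)}$. Writing $v_0\coloneqq w_1^{\ast}(\xi_-^{(\llbracket c+2\rrbracket;1)}\otimes\xi_+^{(\llbracket c\rrbracket;1)})+w_1^{\ast}(\xi^{(\llbracket c\rrbracket;1)}\otimes\xi^{(\llbracket c\rrbracket;1)})$, the $H_1$ identity becomes the rank-one operator equation
\[
v_0\,\gamma_1^{\ast}=\kappa_c\,\xi^{(\llbracket c\rrbracket;1)}\big(\xi^{(t;1)}\big)^{\ast},
\]
which forces $v_0$ to be proportional to $\xi^{(\llbracket c\rrbracket;1)}$ and $\gamma_1$ to be proportional to $\xi^{(t;1)}$, with the product of the two constants equal to $\kappa_c$. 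I expect the scalar shift $-(q+q^{-1})^{-1}\llbracket c\rrbracket t$ in the definition of $A_c$ to be calibrated exactly so that this $r=1$ equation (and the $r=0$ scalar identity) hold; the cleanest way to decouple the channels is to establish the ``radiality'' statement $\gamma_1\parallel\xi^{(t;1)}$ first. The relation for $T^+_{c-2}T^-_c$ is proved by the identical three-channel comparison using the lowered vectors, and the intertwining relations $A_{c+2}T^+_c=T^+_cA_c$ and $A_{c-2}T^-_c=T^-_cA_c$ by expanding both products (whose columns are again all equal to $\gamma_r$) and checking that the row vectors agree in each channel.

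\emph{Main obstacle.} The real work is computational: recording the explicit $q$-Clebsch--Gordan coefficients for $H_1\otimes H_1$, computing the projections $w_r^{\ast}$ of the explicit vectors $\xi^{(\llbracket c\rrbracket;1)}$, $\xi_{\pm}^{(\llbracket c\rrbracket;1)}$, $\xi^{(t;1)}$, and collapsing the resulting Laurent polynomials in $q^{\pm a}$, $q^{\pm c}$ into the compact quadratic expressions in $A_c$ on the right-hand sides. Keeping the normalizations of the $w_r$ identical on the two sides, so that the shared column vectors $\gamma_r$ genuinely cancel, and correctly tracking the shift $c\mapsto c+2$ in the \emph{second} index of $T^-_{c+2}$ (as opposed to the first index of $T^+_c$), are where sign and $q$-power errors are easiest to make. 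As an independent check one may verify each identity on a single irreducible $SL(2,\mathbb{R})_t$-admissible representation, where every $\mathcal{H}(c)$ is one-dimensional and all operators act as explicit scalars fixed by the spectrum $\sigma(\pi_s(\sqrt{-1}B_t))=\{[a+2i]\mid i=-s,\ldots,s\}$.
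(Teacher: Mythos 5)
First, a point of reference: the paper itself contains no proof of this proposition --- it is imported verbatim from \cite[Proposition 3.4, Lemma 3.7]{DCDz1}, so there is no internal argument to compare yours against, and your attempt has to be judged on its own. Judged that way, your framework is correct: both sides of each relation are elements of $\mathcal{O}_q(S_t^2)\subset\mathcal{O}_q(SU(2))$; your fusion formula $u^1_{v,\zeta}u^1_{v',\zeta'}=\sum_r u^r_{w_r^{\ast}(v\otimes v'),w_r^{\ast}(\zeta\otimes\zeta')}$ is valid for isometric intertwiners with $\sum_r w_rw_r^{\ast}=\id$; linear independence of matrix coefficients of inequivalent irreducibles legitimately reduces each relation to one identity per channel; and the ``radiality'' statement $\gamma_r\parallel\eta^r_{[a]}$ that you want to establish first in fact has a soft proof: $\mathcal{O}_q(S_t^2)=I^{\perp}$ is a subalgebra whose spin-$r$ part consists exactly of the coefficients $u^r_{v,\eta^r_{[a]}}$, so the column vector of every spin-$r$ component of a product of its elements must be proportional to $\eta^r_{[a]}$.

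The genuine gap is that your attempt stops precisely where the content of the proposition begins. All of the quantitative information --- that the constants are $\{c\pm a+1\}$ and $\{c\pm a-1\}$, and that the shift $-(q+q^{-1})^{-1}\llbracket c\rrbracket t$ in the definition of $A_c$ is the right calibration --- lives in the three channel identities, and you verify none of them; you write that you ``expect'' the calibration to work out. Note that even the qualitative assertion that $T^-_{c+2}T^+_c$ lies in $\spn\{1,A_c,A_c^2\}$ is not automatic: it is equivalent to the unproved $H_2$-channel proportionality $w_2^{\ast}(\xi_-^{(\llbracket c+2\rrbracket;1)}\otimes\xi_+^{(\llbracket c\rrbracket;1)})\parallel w_2^{\ast}(\xi^{(\llbracket c\rrbracket;1)}\otimes\xi^{(\llbracket c\rrbracket;1)})$, so nothing in the statement has actually been established. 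Your proposed fallback --- verifying the identities on a single irreducible $SL(2,\mathbb{R})_t$-admissible representation --- cannot close this gap either: an identity checked in one module gives an identity in the algebra only if that module is faithful (which you would have to prove), and the scalars by which $T^{\pm}_c$ acts between the one-dimensional weight spaces are not ``fixed by the spectrum of $\pi(\sqrt{-1}B_t)$''; they depend on the module structure, which is exactly what these relations are used to pin down (compare the computation of $T^-_{a+2n}e_{a+2n}$ in the proof of Lemma 4.1). So what you have is a correct reduction scheme plus a plan for a computation, not a proof; to complete it you must either carry out the $q$-Clebsch--Gordan calculation in all three channels or fall back on the explicit computation cited from \cite{DCDz1}.
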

We define an important class of $U_q(\mathfrak{sl}(2,\mathbb{R})_t)$-modules.
\begin{defn}\cite[Definition 3.5]{DCDz1}Let $\lambda,b\in\mathbb{R}$.
We say a $U_q(\mathfrak{sl}(2,\mathbb{R})_t)$-module $(V,\pi)$ is \emph{$(\lambda,b)$-basic} if there exists $b\in\mathbb{R}$ and a cyclic vector $\xi\in V$ such that $\pi(\sqrt{-1}B_t)\xi=[b]\xi$ and $\pi(A_b)\xi=\lambda\xi$.
\end{defn}
By \cite[p. 11,12]{DCDz1}, one can construct the `largest' $(\lambda,b)$-basic modules $\mathcal{M}_{\lambda,b}$, meaning that each $\mathcal{M}_{\lambda,b}$ satisfies the following universal property(\cite[Proposition 3.11]{DCDz1}):
\begin{prop}\label{univ}Let $\lambda,b\in\mathbb{R}$. There exists a $(\lambda, b)$-basic module $\mathcal{M}_{\lambda, b}$ such that any $(\lambda,b)$-basic module is a quotient of $\mathcal{M}_{\lambda, b}$.
\end{prop}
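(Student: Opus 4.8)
The plan is to realize $\mathcal{M}_{\lambda,b}$ as an induced (Verma-type) module, so that the universal property is essentially built into the construction. Write $\mathcal{A}\coloneqq U_q(\mathfrak{sl}(2,\mathbb{R})_t)$. Both $\sqrt{-1}B_t-[b]$ and $A_b-\lambda$ lie in $\mathcal{A}$: the former because $B_t\in I\subset\mathcal{A}$, the latter because $A_b$ is a spin-$1$ matrix coefficient lying in the Podle\'s sphere $\mathcal{O}_q(S_t^2)\subset\mathcal{A}$. Let $L_{\lambda,b}\coloneqq\mathcal{A}(\sqrt{-1}B_t-[b])+\mathcal{A}(A_b-\lambda)$ be the left ideal they generate, and set $\mathcal{M}_{\lambda,b}\coloneqq\mathcal{A}/L_{\lambda,b}$ with $\xi$ the image of $1$. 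By construction $\pi(\sqrt{-1}B_t)\xi=[b]\xi$ and $\pi(A_b)\xi=\lambda\xi$, and $\xi$ is cyclic; so, provided $\xi\neq0$, this is a $(\lambda,b)$-basic module.

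Granting that, the universal property is formal. Given any $(\lambda,b)$-basic module $(V,\rho)$ with cyclic vector $\eta$, the assignment $x\mapsto\rho(x)\eta$ defines a surjective homomorphism of left $\mathcal{A}$-modules $\mathcal{A}\to V$. Since $\rho(\sqrt{-1}B_t)\eta=[b]\eta$ and $\rho(A_b)\eta=\lambda\eta$, this map annihilates the two generators of $L_{\lambda,b}$, hence all of $L_{\lambda,b}$, and therefore descends to a surjection $\mathcal{M}_{\lambda,b}\to V$. Thus every $(\lambda,b)$-basic module is a quotient of $\mathcal{M}_{\lambda,b}$.

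The one substantive point is to show $\mathcal{M}_{\lambda,b}\neq0$ (so that $\xi\neq0$) and to pin down its size; this is where I expect the real work to lie. I would settle it by exhibiting an explicit concrete model and mapping onto it. Take the vector space with basis $\{\xi_{b+2k}\}_{k\in\mathbb{Z}}$, declare $\pi(\sqrt{-1}B_t)\xi_{b+2k}=[b+2k]\xi_{b+2k}$ and $\pi(A_{b+2k})\xi_{b+2k}=\lambda\xi_{b+2k}$, and let the raising and lowering operators act by $T^+_{b+2k}\xi_{b+2k}=\xi_{b+2k+2}$ and $T^-_{b+2k+2}\xi_{b+2k+2}=(\{b+2k-a+1\}-\lambda)(\{b+2k+a+1\}+\lambda)\,\xi_{b+2k}$, the lowering scalars being forced, once the raising maps are normalized, by the first quadratic relation of the second Proposition above. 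One then checks that the second quadratic relation and the intertwining relations $A_{c\pm2}T^\pm_c=T^\pm_cA_c$ hold automatically, so that (the Podle\'s sphere being generated by the spin-$1$ coefficients $T^\pm_c,A_c$, whose values on each weight space are thereby prescribed, together with the diagonal action of $B_t\in I$) this data extends to a genuine action of $\mathcal{A}$. This produces a nonzero $(\lambda,b)$-basic module onto which $\mathcal{M}_{\lambda,b}$ surjects by the previous paragraph; in particular $\xi\neq0$ and $\mathcal{M}_{\lambda,b}$ is indeed $(\lambda,b)$-basic.

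The main obstacle is precisely the consistency check of the last paragraph: verifying that, for the chosen normalization, the four relations of the second Proposition above are mutually compatible and that no further relation of the generators-and-relations presentation of $\mathcal{O}_q(S_t^2)\bowtie I$ is violated (the cross relations with $B_t$ being encoded by the weight grading). This is a finite but careful computation with the data of Section~\ref{rep}. Once it is in place, comparing $\mathcal{M}_{\lambda,b}$ with the explicit model via the surjection, together with the observation that every weight space $\mathcal{M}_{\lambda,b}(b+2k)$ is at most one-dimensional (each is spanned by the image of a word in $T^\pm$ applied to $\xi$, and $A$ acts there by the single scalar $\lambda$), shows that the two modules coincide, which simultaneously confirms nontriviality and identifies $\mathcal{M}_{\lambda,b}$ concretely.
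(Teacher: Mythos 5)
The paper contains no internal proof of this proposition: it is imported from \cite{DCDz1} (Proposition 3.11 there, with the construction on pp.\ 11--12), so your attempt can only be judged on its own merits and against that source. Your architecture is the natural one and matches the Verma-style construction: define $\mathcal{M}_{\lambda,b}$ as $\mathcal{A}/L_{\lambda,b}$ with $L_{\lambda,b}$ the left ideal generated by $\sqrt{-1}B_t-[b]$ and $A_b-\lambda$, get the universal property formally, and reduce everything to exhibiting one nonzero $(\lambda,b)$-basic module. The formal half is correct and complete; note it even shows that $\mathcal{M}_{\lambda,b}\neq 0$ follows automatically once \emph{any} nonzero basic module exists, since the surjection $\mathcal{A}\to V$ kills $L_{\lambda,b}$.

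The gap is that your production of such a module is asserted, not proven, and it is not the ``finite but careful computation'' you describe. Checking the four relations of Subsection~\ref{rep} on your model $\{\xi_{b+2k}\}_{k\in\mathbb{Z}}$ is easy (and your check is right), but concluding that the prescribed operators ``extend to a genuine action of $\mathcal{A}$'' requires knowing that those relations, together with the weight grading, form a \emph{complete} presentation of $\mathcal{O}_q(S_t^2)\bowtie I$ --- or equivalently a PBW-type statement such as $\mathcal{O}_q(S_t^2)=\spn\{T^{\pm,(k)}_c P(A_c)\}$ with linear independence. Neither is available in this paper: the Podle\'{s} sphere is defined here as the invariant subalgebra $I^{\perp}$, not by generators and relations, and $T^{\pm}_c, A_c$ for varying $c$ are overlapping linear combinations of the same three spin-$1$ matrix coefficients, so there is no finite list of relations to check until such a presentation is established. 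That structure theory is exactly the content of the citation to \cite{DCDz1}. A secondary error: even granting well-definedness, your final claim that the surjection identifies $\mathcal{M}_{\lambda,b}$ with your model is false at degenerate parameters --- precisely the cases $(\lambda,b)=(q+q^{-1},a\pm2)$ used in this paper. There your forced normalization gives $T^-_{a+2}\xi_{a+2}=0$, whereas in the universal module $e_a\coloneqq T^-_{a+2}e_{a+2}$ is a \emph{basis} vector (this is why $\mathcal{D}_2^{+}=\mathcal{M}_{q+q^{-1},a+2}/\mathcal{N}_{q+q^{-1},a+2}$ is a proper quotient). Nontriviality would survive, but the identification, and with it your one-dimensionality argument as stated, does not.
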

Let $e_b\in\mathcal{M}_{\lambda,b}$ be a cyclic vector satisfying $\sqrt{-1}B_te_b=[b]e_b$. Put
\begin{align*}
  e_{b+2n}&\coloneqq T_b^{+,(n)}e_b\coloneqq T^+_{b+2n-2}T^+_{b+2n-4}\cdots T^+_{b}e_b\quad(n=1,2,\ldots),\\
  e_{b-2n}&\coloneqq T_b^{-,(n)}e_b\coloneqq T^-_{b-2n+2}T^-_{b-2n+4}\cdots T^-_{b}e_b\quad(n=1,2,\ldots).
\end{align*}
$\{e_{b+2n}\mid n\in\mathbb{Z}\}$ forms a basis for $\mathcal{M}_{\lambda,b}$ and $e_{b+2n}$ satisfies $\sqrt{-1}B_te_{b+2n}=[b+2n]e_{b+2n}$ for each $n\in\mathbb{Z}$. There exists a unique $\ast$-invariant sesquilinear form on $\mathcal{M}_{\lambda, b}$\cite[Proposition 3.12]{DCDz1}. Let $\mathcal{N}_{\lambda, b}$ denote the kernel of this sesquilinear form. Put $\mathcal{L}_{\lambda, b}=\mathcal{M}_{\lambda, b}/\mathcal{N}_{\lambda, b}$.\par
We have the list of $SL(2,\mathbb{R})_t$-admissible irreducible $\ast$-representations(\cite[Proposition 3.17]{DCDz1}). All the $SL(2,\mathbb{R})_t$-admissible irreducible $\ast$-representations we need for the construction of our $1$-cocycle are the following:
\begin{enumerate}
    \item The principal series representation $\mathcal{L}_{\lambda,a}=\mathcal{M}_{\lambda,a}$, whose cyclic vector is $e_a$ and $e_{a+2n}\neq0$ for all $n\in\mathbb{Z}$. ($0<\lambda<q+q^{-1}$)
    \item The discrete series representation $\mathcal{L}_{q+q^{-1},a+2}=\mathcal{D}_2^+$, whose cyclic vector is $e_{a+2}$. The vector $e_{a+2}$ satisfies $T^-_{a+2}e_{a+2}=0$ and $A_{a+2}e_{a+2}=(q+q^{-1})e_{a+2}$.
    \item The discrete series representation $\mathcal{L}_{q+q^{-1},a-2}=\mathcal{D}_2^-$, whose cyclic vector is $e_{a-2}$. The vector $e_{a-2}$ satisfies $T^-_{a-2}e_{a-2}=0$ and $A_{a-2}e_{a-2}=(q+q^{-1})e_{a-2}$.
    \item The trivial representation $\mathcal{L}_{q+q^{-1},a}=\mathds{1}$, whose cyclic vector is $e_a$. The vector $e_a$ satisfies $T^{\pm}_{a}e_a=0$ and $A_{a}e_{a}=(q+q^{-1})e_a$.
\end{enumerate}
Note that $\mathcal{M}_{\lambda,a}\to\mathds{1}$ as $\lambda\to q+q^{-1}$ in Fell's topology. Note also that each of $\mathcal{D}_{2}^{\pm}$ has an inner product $\langle\bullet,\bullet\rangle_{_{_{\mathcal{D}_{2}^{\pm}}}}$ such that $\langle T_{a}^{\pm}e_a,T_{a}^{\pm}e_a\rangle_{_{_{\mathcal{D}_{2}^{\pm}}}}=1$ respectively.
\subsection{The Drinfeld double coideal $\mathcal{O}_q(S^{2}_t)\bowtie\mathcal{I}_c$}
Let $\mathcal{A}$ be a CQG Hopf-$\ast$ algebra. Let $(U_{\alpha}\in\mathbb{B}(H_{\alpha})\otimes\mathcal{A})_{\alpha}$ be a maximal family of mutually inequivalent finite dimensional unitary corepresentations of $\mathcal{A}$, then as an algebra $\mathcal{A}=\bigoplus_{\alpha}\mathbb{B}(H_{\alpha})^{\ast}$. Put $\mathcal{U}\coloneqq\Hom(\mathcal{A},\mathbb{C})=\prod_{\alpha}\mathbb{B}(H_{\alpha})$ with the product topology. Put $\mathcal{U}\compconj{\otimes}\mathcal{U}\coloneqq\prod_{\alpha, \alpha'}\mathbb{B}(H_{\alpha})\otimes\mathbb{B}(H_{\alpha'})=\Hom(\mathcal{A}\otimes\mathcal{A},\mathbb{C})$. The product on $\mathcal{A}$ defines a unital $\ast$-homomorphism $\Delta\colon\mathcal{U}\to\mathcal{U}\compconj{\otimes}\mathcal{U}$. The coproduct on $\mathcal{A}$ defines a product on $\mathcal{U}$. There is a one to one correspondence between finite dimensional unitary corepresentations of $\mathcal{A}$ and finite dimensional continuous $\ast$-representations of $\mathcal{U}$. We denote the $\ast$-representation of $\mathcal{U}$ corresponding to corepresentation $U_{\alpha}$ by $\pi_{\alpha}\colon\mathcal{U}\rightarrow\mathbb{B}(H_{\alpha})$.
Let $\mathcal{B}\subset \mathcal{A}$ be a unital left coideal $\ast$-subalgebra of $\mathcal{A}$. Let $\mathcal{B}_{+}\coloneqq\mathcal{B}\cap\ker(\epsilon)$. Then $\mathcal{C}\coloneqq\mathcal{A}/\mathcal{B}_{+}\mathcal{A}$ has a structure of a right $\mathcal{A}$-module coalgebra such that the canonical projection $p_{\mathcal{C}}\colon\mathcal{A}\twoheadrightarrow\mathcal{C}$ is a homomorphism of right $\mathcal{A}$-module coalgebras \cite[Proposition1]{Tak}. Let $\mathcal{I}\coloneqq\Hom(\mathcal{C}, \mathbb{C})$. The surjection $p_{\mathcal{C}}$ induces an inclusion $\mathcal{I}\subset\mathcal{U}$. Let $(G_{\beta}, \theta_{\beta})_{\beta}$ be the maximal family of the mutually inequivalent $\ast$-representations of $\mathcal{I}$ such that for each $\beta$, the representation $\theta_{\beta}$ arises as a subrepresentation of $\pi_{\alpha}$ for some $\alpha$. Then $\mathcal{I}=\prod_{\beta}\mathbb{B}(G_{\beta})$. The right $\mathcal{A}$-module structure on $\mathcal{C}$ dualizes to a unital $\ast$-homomorphim $\Delta\colon\mathcal{I}\to\mathcal{I}\compconj{\otimes}\mathcal{U}=\prod_{\alpha,\beta}\mathbb{B}(G_{\beta})\otimes\mathbb{B}(H_{\alpha})$. As in \cite{DCDz2}, let $\Phi_{\mathcal{C}}\in\mathcal{I}$ be the self-adjoint support projection of $\epsilon\colon\mathcal{I}\to\mathbb{C}$. This projection plays an important role. We put $\mathcal{I}_c\coloneqq\bigoplus_{\beta}\mathbb{B}(G_{\beta})$, then $\Phi_{\mathcal{C}}\in\mathcal{I}_c$. Let $\mathcal{I}_{\infty}\coloneqq l_{\infty}$-$\prod_{\beta}\mathbb{B}(G_{\beta})$ be the set of uniformly bounded elements in $\mathcal{I}$. We call $\mathcal{I}_c\subset\mathcal{I}_{\infty}\subset\mathcal{I}$ the \emph{stabilizer duals} of $\mathcal{B}$. We denote the projection onto the direct summand $\mathbb{B}(G_{\beta})$ by $\Phi_{\beta}$.\par
If $\mathcal{A}=\mathcal{O}_q(SU(2))$ and $\mathcal{B}=\mathcal{O}_q(S_t^2)$, then $\mathcal{I}_c=\bigoplus_{n\in\mathbb{Z}}\mathbb{B}(\mathbb{C}_n)$, where $\mathbb{C}_n$ is the one-dimensional vector space on which $\sqrt{-1}B_t$ acts as $[a+2n]$. We denote the projection from $\mathcal{I}_c$ onto the $n$-th direct summand $\mathbb{B}(\mathbb{C}_n)$ by $\Phi_n$. Let $\mathcal{U}\coloneqq \Hom(\mathcal{A},\mathbb{C})$. Inside of the Drinfeld double $\mathcal{A}\bowtie\mathcal{U}$, we have a unital left coideal $\ast$-subalgebra $\mathcal{O}_q(S_t^2)\bowtie\mathcal{I}_c$ generated by $\mathcal{O}_q(S_t^2)$ and $\mathcal{I}_c$. The orthogonality relations for this subalgebra is as follows.
\begin{align*}
 &by=y_{(1)}(b\lhd y_{(2)}), yb=(b\lhd S^{-1}(y_{(2)}))y_{(1)}\quad(b\in \mathcal{O}_q(S_t^2), y\in \mathcal{I}_c).   
\end{align*}
One can show that the following correspondence between representations on a preHilbert space $\mathcal{H}$ of $U_q(\mathfrak{sl}(2,\mathbb{R})_t)$, $\mathcal{O}_q(S_t^2)\bowtie\mathcal{I}$ and $\mathcal{O}_q(S_t^2)\bowtie\mathcal{I}_c$.
\begin{enumerate}
    \item $SL(2,\mathbb{R})_t$-admissible $\ast$-representation of $U_q(\mathfrak{sl}(2,\mathbb{R})_t)$ on $\mathcal{H}$
    \item Unital continuous $\ast$-representation of $\mathcal{O}_q(S_t^2)\bowtie\mathcal{I}$ on $\mathcal{H}$, where $\Phi_n\mathcal{H}$ is finite dimensional for each $n$.
    \item Non-degenerate  $\ast$-representation of $\mathcal{O}_q(S_t^2)\bowtie\mathcal{I}_c$ on $\mathcal{H}$, where $\Phi_n\mathcal{H}$ is finite dimensional for each $n$.
\end{enumerate}
We show that any irreducible $\ast$-representation of $\mathcal{O}_q(S_t^2)\bowtie\mathcal{I}_c$ induces an $SL(2,\mathbb{R})_t$-admissible irreducible $\ast$-representation of $U_q(\mathfrak{sl}(2,\mathbb{R})_t)$. This is an easy corollary of the following lemma.
\begin{lem}
    Let $\pi\colon\mathcal{O}_q(S_t^2)\bowtie\mathcal{I}_c\to\mathbb{B}(H)$ be an irreducible $\ast$-representation on a Hilbert space $H$. Then the dimension of $\pi(\Phi_n)H$ is at most one.
\end{lem}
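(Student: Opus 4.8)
The plan is to pass to the von Neumann algebra $M\defeq\pi(\mathcal{O}_q(S_t^2)\bowtie\mathcal{I}_c)''$ and exploit that, by irreducibility, $M=\mathbb{B}(H)$. Writing $V_n\defeq\pi(\Phi_n)H$, the assertion $\dim V_n\le 1$ is equivalent to $\mathbb{B}(V_n)$ being commutative. Since $\Phi_n\in M$ and $M=\mathbb{B}(H)$, we have $\Phi_nM\Phi_n=\mathbb{B}(V_n)$, and because compression by the bounded operator $\Phi_n$ is weakly continuous, $\mathbb{B}(V_n)=\Phi_nM\Phi_n$ is the weak closure of the $\ast$-algebra $\Phi_n\pi(\mathcal{O}_q(S_t^2)\bowtie\mathcal{I}_c)\Phi_n$. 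Hence it suffices to show that this compressed $\ast$-algebra is commutative; I will in fact show that it is generated by the single self-adjoint operator $A^{(n)}\defeq\restr{\pi(A_{a+2n})}{V_n}$, whose polynomial algebra $\mathcal{P}_n\defeq\mathbb{C}[A^{(n)}]$ is commutative.

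First I record the block structure of the generators. The algebra $\mathcal{O}_q(S_t^2)\bowtie\mathcal{I}_c$ is generated by the projections $\Phi_m$ together with the three spin-$1$ matrix coefficients $X_j\defeq u^1_{\xi^1_j,\xi^{(t;1)}}$ ($j=1,0,-1$) generating the Podle\'{s} sphere. For each fixed $m$ the vectors $\xi^{(\llbracket a+2m\rrbracket;1)},\xi_{\pm}^{(\llbracket a+2m\rrbracket;1)}$ form a basis of $H_1$, so $\spn\{X_1,X_0,X_{-1}\}=\spn\{T^+_{a+2m},A_{a+2m},T^-_{a+2m}\}$ up to a scalar. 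Because $\pi(T^+_{a+2m})$ raises the $B_t$-weight by $2$, $\pi(T^-_{a+2m})$ lowers it by $2$, and $\pi(A_{a+2m})$ preserves it, the compression $\Phi_{m'}(\bullet)\Phi_m$ annihilates all but one of these three on each block. Consequently each generator block is a scalar multiple of a single operator: setting $S^+_m\defeq\Phi_{m+1}\pi(T^+_{a+2m})\Phi_m$ and $S^-_m\defeq\Phi_{m-1}\pi(T^-_{a+2m})\Phi_m$, one has $\Phi_{m+1}\pi(X_j)\Phi_m\in\mathbb{C}S^+_m$, $\Phi_{m-1}\pi(X_j)\Phi_m\in\mathbb{C}S^-_m$, and $\Phi_m\pi(X_j)\Phi_m\in\spn\{A^{(m)},\id_{V_m}\}$, while $\Phi_{m'}\pi(X_j)\Phi_m=0$ for $|m'-m|\ge 2$. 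All operators involved are bounded, since matrix coefficients of a unitary corepresentation act by norm-$\le 1$ operators in any $\ast$-representation.

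Now I compress an arbitrary word in the generators. Inserting $\sum_m\Phi_m=\id$ between successive factors expresses $\Phi_n\pi(w)\Phi_n$ as a finite sum, over lattice walks $n=m_0,m_1,\dots,m_k=n$ with steps in $\{0,\pm1\}$, of products of the blocks above. Each such closed-walk product lies in $\mathcal{P}_n$, by induction on the walk length. A closed walk of positive length has a local extremum, i.e.\ two consecutive shift factors of the form $S^-_{m+1}S^+_m$ or $S^+_{m-1}S^-_m$; by the product relations of Section~\ref{rep} (see \cite[Proposition 3.4, Lemma 3.7]{DCDz1}) these equal $(\{2m+1\}-A^{(m)})(\{2a+2m+1\}+A^{(m)})$ and $(\{2m-1\}-A^{(m)})(\{2a+2m-1\}+A^{(m)})$ respectively, shortening the walk by two steps. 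The intertwining identities $A^{(m+1)}S^+_m=S^+_mA^{(m)}$ and $A^{(m-1)}S^-_m=S^-_mA^{(m)}$ then let one slide every resulting $A$-factor to the base point $n$, where it becomes a polynomial in $A^{(n)}$. Hence $\Phi_n\pi(\mathcal{O}_q(S_t^2)\bowtie\mathcal{I}_c)\Phi_n=\mathcal{P}_n$, which is commutative; its weak closure $\mathbb{B}(V_n)$ is therefore commutative, and a full operator algebra is commutative only on a space of dimension at most one, so $\dim V_n\le 1$.

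\textbf{Main obstacle.} The computational heart, and the step most likely to need care, is the block analysis of the second paragraph: one must check that the three spin-$1$ vectors $\xi^{(\llbracket c\rrbracket;1)},\xi_{\pm}^{(\llbracket c\rrbracket;1)}$ are linearly independent for every $c=a+2m$, so that the change of basis to $T^{\pm}_{a+2m},A_{a+2m}$ is genuinely invertible and the off-diagonal blocks are at most one-dimensional, and one must verify that no higher-spin generators of $\mathcal{O}_q(S_t^2)$ are needed beyond the $X_j$. Once the one-dimensionality of the off-diagonal blocks and the quadratic relations are in place, the walk-reduction and the compression argument are routine.
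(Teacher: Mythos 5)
Your proof is correct, and its algebraic heart coincides with the paper's: both arguments reduce the lemma to showing that the compressed algebra $\Phi_n(\mathcal{O}_q(S_t^2)\bowtie\mathcal{I}_c)\Phi_n$ consists of polynomials in $A_{a+2n}$ and is therefore commutative. The machinery on either side of that core, however, is genuinely different. The paper completes $\mathcal{O}_q(S_t^2)\bowtie\mathcal{I}_c$ to the universal $C^*$-algebra $C_u(SL_q(2,\mathbb{R})_t)$, uses that the corner of an irreducible representation is irreducible or zero, and then asserts without proof a normal-form decomposition $\mathcal{O}_q(S_t^2)=\spn\{T_{a+2n}^{\pm,(k)}P(A_{a+2n})\}$, from which the compression identity is a one-line weight argument. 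You instead stay with the given representation and argue von Neumann--theoretically: Schur's lemma gives $\pi(\cdot)''=\mathbb{B}(H)$, weak-operator continuity of compression identifies $\mathbb{B}(\pi(\Phi_n)H)$ with the weak closure of the compressed $\ast$-algebra, and commutativity passes to weak closures; and you \emph{prove} the commutativity of the corner from the spin-$1$ generators by the walk reduction, using the quadratic relations $T^-_{c+2}T^+_c=(\{c-a+1\}-A_c)(\{c+a+1\}+A_c)$, $T^+_{c-2}T^-_c=(\{c-a-1\}-A_c)(\{c+a-1\}+A_c)$ and the intertwining relations $A_{c\pm2}T^{\pm}_c=T^{\pm}_cA_c$. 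What your route buys is self-containedness: no universal norm (whose finiteness must be checked), and no appeal to the unproved normal form --- your walk reduction is in effect a proof of exactly the consequence of that normal form that the lemma needs. What the paper's route buys is brevity, given those standard inputs.

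Three points to tighten. First, $\spn\{X_1,X_0,X_{-1}\}$ and $\spn\{T^+_c,A_c,T^-_c\}$ agree only modulo the identity, since $A_c$ contains the constant term $-(q+q^{-1})^{-1}\llbracket c\rrbracket t$; this is harmless because you keep $\id_{V_m}$ in the diagonal blocks. The linear independence you flag does hold: the determinant of the matrix expressing $\xi^{(\llbracket c\rrbracket;1)},\xi_{+}^{(\llbracket c\rrbracket;1)},\xi_{-}^{(\llbracket c\rrbracket;1)}$ in the basis $\xi^1_1,\xi^1_0,\xi^1_{-1}$ is a nonzero multiple of $\{c\}\bigl((q+q^{-1})^2+\llbracket c\rrbracket^2\bigr)$, which never vanishes for real $c$. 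Second, in the walk reduction the two shift factors at a local extremum may be separated by diagonal factors; you must first slide those through a shift factor via your intertwining identities and only then apply the quadratic relation --- a reordering of your steps, not a new idea. Third, both $\sum_m\pi(\Phi_m)=\id$ (strongly) and the identification of $\pi(\cdot)''$ with the weak closure of $\pi(\cdot)$ require non-degeneracy; for an irreducible $\pi$ this is automatic unless $\pi(\mathcal{I}_c)=0$, in which case $\pi(\Phi_n)H=0$ and the lemma is trivial.
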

\begin{proof}
   Let $||\bullet||_u\coloneqq\sup\{||\theta(\bullet)||\mid\text{$\theta$ is a $\ast$-representation of $\mathcal{O}_q(S_t^2)\bowtie\mathcal{I}_c$}\}$, then $||\bullet||_u$ is a $C^{\ast}$-norm. Define $C_u(SL_q(2,\mathbb{R})_t)$ as the $C^*$-completion of the normed $\ast$-algebra $\mathcal{O}_q(S_t^2)\bowtie\mathcal{I}_c$ with respect to the norm $||\bullet||_u$. Extend $\pi$ to the $\ast$-representation of $C_u(SL_q(2,\mathbb{R})_t)$ and denote it by $\pi_u$.\par
   Since $\pi$ is irreducible, the extension $\pi_u$ is irreducible also and then 
   \begin{align*}
\pi_{u, n}\colon\Phi_nC_u(SL_q(2,\mathbb{R})_t)\Phi_n\to\mathbb{B}(\Phi_n H),\quad x\mapsto\Phi_n\pi(x)\Phi_n
   \end{align*}
   is irreducible or zero dimensional for each $n$. We claim that $\Phi_n(\mathcal{O}_q(S_t^2)\bowtie\mathcal{I}_c)\Phi_n$ is commutative. Indeed, we have $\mathcal{O}_q(S_t^2)=\spn\{T^{\pm,(k)}_{a+n}P(A_{a+n})\mid k=0,1,2,\ldots; P\hspace{1mm}\text{is a polynomial}\hspace{1mm} \}$ and 
\begin{align*}
\Phi_n T_{a+n}^{\pm,(k)} P(A_{a+n})\Phi_n= 
\begin{cases}
0 & \text{if}\hspace{1mm}k\neq0\\
P(A_{a+n})\Phi_n & \text{if}\hspace{1mm} k=0
\end{cases}
\end{align*}
This implies that $\Phi_n(\mathcal{O}_q(S_t^2)\bowtie\mathcal{I}_c)\Phi_n\subset\Phi_n\mathbb{C}[A_{a+n}]\Phi_n$. Thus the corner $\Phi_nC_u(SL_q(2,\mathbb{R})_t)\Phi_n$ is commutative
   and $\pi_{u, n}$ is a one dimensional or zero dimensional representation.
\end{proof}
\begin{thm}
   The $\ast$-representation of $U_q(\mathfrak{sl}(2,\mathbb{R})_t)$ corresponding to a given irreducible $\ast$-representation of $\mathcal{O}_q(S_t^2)\bowtie\mathcal{I}_c$ is an $SL(2,\mathbb{R})_t$-admissible irreducible $\ast$-representation. 
\end{thm}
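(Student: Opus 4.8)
The plan is to deduce the statement from the Lemma just proved together with the correspondence between the three classes of representations recorded above. So I would fix an irreducible $\ast$-representation $\pi\colon\mathcal{O}_q(S_t^2)\bowtie\mathcal{I}_c\to\mathbb{B}(H)$ and set $\mathcal{H}\defeq\bigoplus_n\pi(\Phi_n)H$, the algebraic direct sum of the ranges of the projections $\Phi_n$.

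First I would establish \emph{admissibility}. By the Lemma each $\pi(\Phi_n)H$ is at most one dimensional, hence finite dimensional, and on this summand $\sqrt{-1}B_t$ is implemented by the scalar $[a+2n]$ (this is how $\mathbb{C}_n$ was identified as a summand of $\mathcal{I}_c$); so the densely defined operator $\pi(\sqrt{-1}B_t)\defeq\sum_n[a+2n]\pi(\Phi_n)$ is diagonalizable with finite dimensional eigenspaces and eigenvalues of the prescribed form $[c]$, $c\in a+\mathbb{Z}$. The $\ast$-compatibility is inherited from $\pi$, and $\mathcal{H}$ is a preHilbert space. I would also check that $\mathcal{H}$ is dense in $H$: the orthogonal projection onto $\overline{\mathcal{H}}$ commutes with every $\Phi_m$ and its range is preserved by the generators $T^{\pm}$ and $A$ of $\mathcal{O}_q(S_t^2)$, so that range is a nonzero closed invariant subspace and hence all of $H$. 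Then $\pi$ restricts to a non-degenerate representation on $\mathcal{H}$ with finite dimensional $\Phi_n$-pieces, and the stated correspondence yields an $SL(2,\mathbb{R})_t$-admissible $\ast$-representation of $U_q(\mathfrak{sl}(2,\mathbb{R})_t)$ on $\mathcal{H}$.

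For \emph{irreducibility} the crucial point I would exploit is that $\{\Phi_n\}_n$ and $B_t$ are mutually expressible on $\mathcal{H}$: one has $\pi(\sqrt{-1}B_t)=\sum_n[a+2n]\pi(\Phi_n)$ as a locally finite sum, while conversely each $\Phi_n$ is the spectral projection of $\pi(\sqrt{-1}B_t)$ onto the eigenvalue $[a+2n]$, which is legitimate because $c\mapsto[c]$ is strictly monotone for $q\in(0,1)$ and so the eigenvalues $[a+2n]$ are pairwise distinct. From this I would conclude that a subspace $K\subset\mathcal{H}$ is invariant under $U_q(\mathfrak{sl}(2,\mathbb{R})_t)=\mathcal{O}_q(S_t^2)\bowtie I$ exactly when it is invariant under $\mathcal{O}_q(S_t^2)\bowtie\mathcal{I}_c$: if $K$ is invariant under $\mathbb{C}[B_t]$, then expanding any $v\in K$ in eigenvectors and applying powers of $\sqrt{-1}B_t$ forces, by a Vandermonde argument, $K=\bigoplus_n(K\cap\pi(\Phi_n)\mathcal{H})$, so $K$ is $\Phi_n$-invariant; the converse is immediate. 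Hence the two representations on $\mathcal{H}$ share the same lattice of invariant subspaces.

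Finally I would transport the (topological) irreducibility of $\pi$ on $H$ to the (algebraic) irreducibility of the admissible representation on $\mathcal{H}$. Given a nonzero invariant $K\subset\mathcal{H}$, the previous step gives $K=\bigoplus_n(K\cap\pi(\Phi_n)\mathcal{H})$ with each summand either $0$ or the whole one dimensional space $\pi(\Phi_n)\mathcal{H}$, and $K$ is $\mathcal{O}_q(S_t^2)\bowtie\mathcal{I}_c$-invariant. Since the generators of $\mathcal{O}_q(S_t^2)$ and the $\Phi_n$ act boundedly, $\overline{K}$ is a nonzero closed invariant subspace of $H$, so $\overline{K}=H$ by irreducibility of $\pi$; applying the continuous projections $\Phi_n$ and using that $\pi(\Phi_n)\mathcal{H}$ is finite dimensional then gives $\pi(\Phi_n)\mathcal{H}=\pi(\Phi_n)\overline{K}\subset K$ for all $n$, whence $K=\mathcal{H}$. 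I expect this last reconciliation of topological irreducibility on $H$ with the algebraic irreducibility demanded of an admissible representation on $\mathcal{H}$ to be the main obstacle, precisely because $B_t$ acts by an unbounded operator and it is only the finite dimensionality of the eigenspaces furnished by the Lemma that makes the passage between the two pictures clean.
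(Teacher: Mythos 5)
Your proof is correct and takes essentially the same approach as the paper: the paper states this theorem as an easy corollary of the preceding lemma (each $\pi(\Phi_n)H$ is at most one-dimensional) combined with the previously stated correspondence between non-degenerate $\ast$-representations of $\mathcal{O}_q(S_t^2)\bowtie\mathcal{I}_c$ with finite-dimensional $\Phi_n$-pieces and $SL(2,\mathbb{R})_t$-admissible $\ast$-representations of $U_q(\mathfrak{sl}(2,\mathbb{R})_t)$, without writing out the verification. Your argument supplies exactly the omitted details (density of $\mathcal{H}=\bigoplus_n\pi(\Phi_n)H$, the Vandermonde matching of $\mathbb{C}[B_t]$-invariant and $\mathcal{I}_c$-invariant subspaces, and the passage from topological irreducibility on $H$ to algebraic irreducibility on $\mathcal{H}$ via the finite-rank projections $\pi(\Phi_n)$), so it is a faithful expansion of the paper's intended proof.
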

\subsection{Askey--Wilson polynomials and matrix coefficients}\label{AW} We need the result in this subsection in order to compute the growth of our cocycle. The best general reference here is Koornwinder's paper \cite{K}. By making formal slight modifications of \cite[Lemma 4.6, Proposition 4.7, Remark 4.8]{K} we get the following theorem.
\begin{thm}\cite[Lemma 4.6, Proposition 4.7, Remark 4.8]{K}
% Let
 %  \[c_i^{n,-a}\coloneqq&\dfrac{(\sqrt{-1})^iq^{-(n-a)i}q^{i^2/2}}{(q^2;q^2)_{n+i}^{1/2}(q^2;q^2)_{n-i}^{1/2}}\qhyp 32{q^{-2n+2i},q^{-2n},-q^{-2n+2a}}{q^{-4n},0}{q^2,q^2}=c_{-i}^{n,-a}\]
  % and
%\[\alpha=&\displaystyle\sum_{i=-n}^nq^i|c_{i}^{n,-a}|^2.\]
 Let $Q_n$ be a one-parameter specialization of an Askey--Wilson polynomial:
\begin{align*}
Q_n(\bullet)=p_n((q+q^{-1})/2)^{-1}p_n(\bullet),
\end{align*} where the Askey--Wilson polynomial $p_n$ is defined by
    \begin{align*}
   & p_n(\cos\theta; -q^{-2a+1}, -q^{2a+1}, q, q| q^2)\\
    &=(-1)^{n}q^{n(-2a+1)}(q^2,-q^{-2a+2},-q^{-2a+2};q)_n\\
    &\cdot\qhyp 43{q^{-n},q^{n+3},-q^{-2a+1}e^{\sqrt{-1}\theta},-q^{-2a+1}e^{-\sqrt{-1}\theta}}{q^2,-q^{-2a+2},-q^{-2a+2}}{q,q}_{.}
\end{align*}
Put $P_n(X)=Q_n(\dfrac{(q+q^{-1}+(q+q^{-1})^{-1}\llbracket a\rrbracket^2)X-(q+q^{-1})^{-1}\llbracket a\rrbracket^2}{2})$. Then $P_0=1$, $\deg{P_n}=n$ and $u_{[a],[a]}^n=P_{n}(u_{[a],[a]}^1)$ for all $n\geq1$.
\end{thm}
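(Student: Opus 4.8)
The plan is to realise $u^n_{[a],[a]}$ as a zonal spherical function for the coideal $I=\mathbb{C}[B_t]$ and then to transport Koornwinder's identification of such functions with Askey--Wilson polynomials into the present normalisation. The key preliminary observation is that $\widetilde{B}_t=B_t+\sqrt{-1}(q-q^{-1})^{-1}t$ is twisted primitive: a direct computation with the coproducts of $e$, $f$ and $k^{-1}$ gives $\Delta(\widetilde{B}_t)=\widetilde{B}_t\otimes k^{-1}+1\otimes\widetilde{B}_t$. Since $\sqrt{-1}\widetilde{B}_t=\sqrt{-1}B_t-[a]$, the distinguished vector $\eta^n_{[a]}$ is exactly the kernel of $\pi_n(\widetilde{B}_t)$, so $u^n_{[a],[a]}$ is the diagonal matrix coefficient at the $\widetilde{B}_t$-invariant vector and in particular lies in $\mathcal{O}_q(S_t^2)=I^{\perp}$.

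First I would record a three-term recurrence. Since matrix coefficients of a tensor product multiply, $u^1_{[a],[a]}\,u^n_{[a],[a]}=u^{1\otimes n}_{\eta^1_{[a]}\otimes\eta^n_{[a]},\,\eta^1_{[a]}\otimes\eta^n_{[a]}}$, and because $\widetilde{B}_t$ annihilates both $\eta^1_{[a]}$ and $\eta^n_{[a]}$ and is twisted primitive, the vector $\eta^1_{[a]}\otimes\eta^n_{[a]}$ is annihilated by $\widetilde{B}_t$ in the tensor-product representation. Decomposing $H_1\otimes H_n\cong H_{n+1}\oplus H_n\oplus H_{n-1}$ as $U_q(\mathfrak{su}(2))$-modules, the kernel of $\widetilde{B}_t$ there is spanned by the images of $\eta^{n+1}_{[a]}$, $\eta^n_{[a]}$, $\eta^{n-1}_{[a]}$; expanding $\eta^1_{[a]}\otimes\eta^n_{[a]}$ in this basis and using that matrix coefficients between inequivalent blocks vanish gives $u^1_{[a],[a]}\,u^n_{[a],[a]}=\alpha_n u^{n+1}_{[a],[a]}+\beta_n u^n_{[a],[a]}+\gamma_n u^{n-1}_{[a],[a]}$, where $\alpha_n$ is the top $q$-Clebsch--Gordan coefficient and is nonzero for real $a$. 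Together with $u^0_{[a],[a]}=1$, an induction then shows that each $u^n_{[a],[a]}$ is a polynomial of degree $n$ in the single variable $u^1_{[a],[a]}$.

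Next I would identify this polynomial family with the prescribed Askey--Wilson specialisation by invoking \cite[Lemma 4.6, Proposition 4.7, Remark 4.8]{K}. After matching Koornwinder's twisted primitive generator and the normalisation of his invariant vectors to the present $\widetilde{B}_t$ and $\eta^n_{[a]}$, his computation expresses $u^n_{[a],[a]}$ as the normalised Askey--Wilson polynomial with parameters $(-q^{-2a+1},-q^{2a+1},q,q\mid q^2)$ evaluated in the variable $\cos\theta$. The only remaining data is the affine substitution relating $u^1_{[a],[a]}$ to $\cos\theta$, and this is pinned down by two constraints: the value at the counit $\epsilon(u^1_{[a],[a]})=\langle\eta^1_{[a]},\eta^1_{[a]}\rangle=1$ must correspond to the point $\cos\theta=(q+q^{-1})/2$, and the scaling is fixed by the leading coefficient; together these force precisely the argument of $Q_n$ appearing in $P_n$.

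The two numerical claims are then immediate. For $P_0=1$, the upper parameter $q^{-n}=1$ collapses the ${}_4\phi_3$ series to its $i=0$ term, so $p_0=1$, whence $Q_0=1$ and $P_0=1$, consistent with $u^0_{[a],[a]}=1$. For $\deg P_n=n$, the normalisation $Q_n=p_n((q+q^{-1})/2)^{-1}p_n$ is well defined because $p_n((q+q^{-1})/2)\neq 0$ (it corresponds to the nonzero counit value) and preserves the degree $n$ of the Askey--Wilson polynomial, while the affine substitution defining $P_n$ is nondegenerate since its leading coefficient $(q+q^{-1}+(q+q^{-1})^{-1}\llbracket a\rrbracket^2)/2$ is strictly positive for real $a$ and $q\in(0,1)$; the check $P_n(1)=Q_n((q+q^{-1})/2)=1$ reflects $\epsilon(u^n_{[a],[a]})=1$. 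The main obstacle I expect is the bookkeeping of the third step: translating Koornwinder's conventions---the exact shape of his twisted primitive element, the phases and norms of his invariant vectors, and his form of the product formula---into the present notation without sign or $q$-power slips, and confirming that the resulting reparametrisation is exactly the one in the statement rather than one differing by an admissible rescaling.
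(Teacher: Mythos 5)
Your proposal is correct and follows essentially the same route as the paper: the paper gives no independent proof, obtaining the theorem from Koornwinder's Lemma 4.6, Proposition 4.7 and Remark 4.8 by ``formal slight modifications,'' which is precisely the identification your third step invokes. The additional scaffolding you supply (twisted primitivity of $\widetilde{B}_t$, the three-term recurrence from $H_1\otimes H_n\cong H_{n+1}\oplus H_n\oplus H_{n-1}$, and pinning down the affine substitution via $\epsilon(u^1_{[a],[a]})=1$ and the leading coefficient) is sound and in fact records more detail than the paper itself does.
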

\section{$1$-cocycles on coideals}
Let us introduce $1$-cocycles on coideals and consider the condition for them to be extended to $1$-cocycles on Drinfeld doubles.
\begin{defn}[$1$-cocycles on coideals]
Let $\mathcal{A}$ be a $*$-bialgebra and $\mathcal{B}$ be a $*$-subalgebra of $\mathcal{A}$. Suppose $\mathcal{H}$ is a preHilbert space and  $ \pi \colon  \mathcal{B} \to \End(\mathcal{H}) $ is a $*$-representation. Then a linear map $C \colon  \mathcal{B} \to \mathcal{H} $ is called a \emph{$(\pi, \epsilon)$-$1$-cocycle} if $C$ satisfies $C(xy)=\pi(x)C(y)+C(x)\epsilon(y)$ for all $x,y \in \mathcal{B}$. We say that a linear map $C \colon  \mathcal{B}\to \mathcal{H}$ is a \emph{coboundary} if $C$ is of the form $\pi(\bullet)\xi-\epsilon(\bullet)\xi$ for some $\xi \in \mathcal{H}$.
\end{defn}
We define the concept of growth, which describes how $1$-cocycles increase.
\begin{defn}
Let $\mathcal{A}$ be a CQG Hopf $\ast$-algebra and $\mathcal{B}\subset\mathcal{A}$ be a unital left (or right) coideal $\ast$-subalgebra of $\mathcal{A}$. Let $(U_{\alpha}, H_{\alpha})_{\alpha\in I}$ be a maximal family of mutually inequivalent unitary corepresentations of $\mathcal{A}$. Write their matrix coefficients by $u^{\alpha}_{\xi,\eta}$. Then $\mathcal{B}=\bigoplus_{\beta\in J}\spn\{u^{\beta}_{\eta^{\beta}_{i},\eta^{\beta}_{j}}\mid i=1,\ldots,m_{\beta};j=1,\ldots, n_{\beta}\}$, where $J\subset I$ and $(\eta^{\alpha}_i)_{i=1}^{\max{m_{\beta},n_{\alpha}}}$ is an orthonormal system (not necessarily a basis) in $H_{\beta}$ for each $\beta$. Let $C\colon\mathcal{B}\to\mathcal{H}$ be a $1$-cocycle. We put  $C^{\beta}\coloneqq(\id\otimes C)(\sum_{i=1}^{m_{\beta}}\sum_{j=1}^{n_{\beta}}\eta^{\beta}_i\eta^{\beta\ast}_j\otimes u^{\beta}_{\eta^{\beta}_{i},\eta^{\beta}_{j}})$. We say a $1$-cocycle $C$ is \emph{proper} if for all $M>0$ there exists a finite subset $F\subset J$ such that  $C^{\beta\ast}C^{\beta}\geq M\sum_{i=1}^{n_{\beta}}\sum_{j=1}^{n_{\beta}}\eta^{\beta}_i\eta^{\beta\ast}_j$ for all $\beta\in J\setminus{F}$.
\end{defn}
\begin{Rem}
In the case $\mathcal{B}=\mathcal{O}_q(S_t^2)$, we will consider the element $u^{n}_{[a]}\coloneqq\displaystyle\sum_{i=-n}^{n}\eta_{[a+2i]}^{n}\otimes u^{n}_{\eta_{[a+2i]}^{n}, \eta_{[a]}^{n}}$ and omitted $\eta_{[a]}^{\ast}$ since $C^{\beta\ast}C^{\beta}$ is a constant multiple of rank one projection $\eta_{[a]}\eta_{[a]}^{\ast}$. 
\end{Rem}
Let $\pi\colon\mathcal{B}\to\End(\mathcal{H})$ be a $*$-representation of $\mathcal{B}$. Let $C\colon \mathcal{B}\to\mathcal{H}$ be a linear map. Let $\Tilde{\pi}\coloneqq\begin{bmatrix}
\pi & C \\
0 & \epsilon \\
\end{bmatrix}\colon\mathcal{B}\to\End(\mathcal{H}\oplus\mathbb{C})$. We have a well-known characterization:

\begin{prop}\label{char} Let $\mathcal{C}$ be a $\ast$-algebra and $\mathcal{H}$ a preHilbert space.
Let $C \colon  \mathcal{B} \to \mathcal{H}$ be a linear map.
 \begin{enumerate}
\item The map $C$ is a $1$-cocycle if and only if $\Tilde{\pi}$ is a representation.
\item Under the assumption that $C$ is a $1$-cocycle, the map $C$ is a coboundary if and only if $\Tilde{\pi}\simeq \pi \oplus \epsilon$ as representations of $\mathcal{B}$.
\end{enumerate}
\end{prop}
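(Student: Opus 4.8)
The plan is to reduce both assertions to the block form of $\Tilde{\pi}$. Throughout I regard $C(x)$ as the operator $\mathbb{C}\to\mathcal{H}$, $\mu\mapsto\mu\,C(x)$, and $\epsilon(x)$ as a scalar, so that $\Tilde{\pi}(x)=\bigl[\begin{smallmatrix}\pi(x)&C(x)\\0&\epsilon(x)\end{smallmatrix}\bigr]$ is genuinely an endomorphism of $\mathcal{H}\oplus\mathbb{C}$. For (1) I would simply multiply out. Since $\pi$ is a representation and $\epsilon$ is a character, the diagonal entries of $\Tilde{\pi}(x)\Tilde{\pi}(y)$ already agree with those of $\Tilde{\pi}(xy)$, while the upper-right entry of the product is $\pi(x)C(y)+C(x)\epsilon(y)$. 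Hence $\Tilde{\pi}(xy)=\Tilde{\pi}(x)\Tilde{\pi}(y)$ for all $x,y$ is equivalent to $C(xy)=\pi(x)C(y)+C(x)\epsilon(y)$, i.e. to the cocycle identity. Unitality is no extra condition: setting $x=y=1$ in the cocycle identity forces $C(1)=0$, which is exactly what $\Tilde{\pi}(1)=\id$ demands.

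For the forward direction of (2), given a coboundary $C=\pi(\bullet)\xi-\epsilon(\bullet)\xi$ I would exhibit an explicit intertwiner. The operator $S=\bigl[\begin{smallmatrix}\id&-\xi\\0&1\end{smallmatrix}\bigr]$ on $\mathcal{H}\oplus\mathbb{C}$ is invertible, with inverse $\bigl[\begin{smallmatrix}\id&\xi\\0&1\end{smallmatrix}\bigr]$, and a one-line matrix check gives $S(\pi\oplus\epsilon)(x)=\Tilde{\pi}(x)S$ precisely because $C(x)=\pi(x)\xi-\epsilon(x)\xi$. Thus $\Tilde{\pi}\simeq\pi\oplus\epsilon$. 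Since all the underlying objects are $\ast$-representations, the existence of this adjointable invertible intertwiner upgrades to a unitary equivalence by polar decomposition, so it is immaterial whether $\simeq$ is read as linear or as unitary equivalence.

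The substance lies in the backward direction of (2). Suppose $S\colon\mathcal{H}\oplus\mathbb{C}\to\mathcal{H}\oplus\mathbb{C}$ is an invertible intertwiner from $\pi\oplus\epsilon$ to $\Tilde{\pi}$, written in blocks as $S=\bigl[\begin{smallmatrix}A&v\\\phi&d\end{smallmatrix}\bigr]$ with $v\in\mathcal{H}$, $\phi\colon\mathcal{H}\to\mathbb{C}$ and $d\in\mathbb{C}$. Comparing the upper-right entries of $S(\pi\oplus\epsilon)(x)$ and $\Tilde{\pi}(x)S$ yields $d\,C(x)=\epsilon(x)v-\pi(x)v$, so as soon as $d\neq0$ we may set $\xi=-v/d$ and read off $C=\pi(\bullet)\xi-\epsilon(\bullet)\xi$, a coboundary. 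The only obstacle is therefore the degenerate case $d=0$, and this is the step I expect to require the $\ast$-structure.

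To handle $d=0$: then the image $(v,0)$ of the $\epsilon$-summand lies in the invariant subspace $\mathcal{H}\oplus0$, forcing $v$ to be an $\epsilon$-eigenvector of $\pi$, while invertibility of $S$ makes $\phi$ a nonzero intertwiner $\pi\to\epsilon$. I would then replace $S$ by $S\,R$ with $R=\bigl[\begin{smallmatrix}\id&\eta\\0&1\end{smallmatrix}\bigr]$, which is an automorphism of $\pi\oplus\epsilon$ for any $\epsilon$-eigenvector $\eta$ of $\pi$; this leaves $\phi$ fixed but changes the corner entry into $\phi(\eta)$. Representing $\phi$ via the inner product as $\phi=\langle\zeta,\bullet\rangle$, the intertwining property together with adjointability forces $\pi(x^{\ast})\zeta=\epsilon(x^{\ast})\zeta$, so $\zeta$ is itself an $\epsilon$-eigenvector of $\pi$ with $\phi(\zeta)=\|\zeta\|^{2}>0$. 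Taking $\eta=\zeta$ produces a new invertible intertwiner with nonzero corner, reducing to the case already settled. Alternatively, one can argue by multiplicities: $\Tilde{\pi}\simeq\pi\oplus\epsilon$ forces the $\epsilon$-isotypic component of $\Tilde{\pi}$ to exceed that of $\pi$ by exactly one, and an extra $\epsilon$-eigenvector is precisely a vector with nonzero last coordinate, whose top component is the desired coboundary datum $\xi$.
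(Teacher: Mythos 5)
Your proposal is correct, and in fact it supplies more than the paper does: the paper states Proposition~\ref{char} as a ``well-known characterization'' and gives no proof at all, so there is no argument of the author's to compare against. Your block-matrix computation for (1), including the observation that unitality of $\Tilde{\pi}$ is automatic because the cocycle identity at $x=y=1$ forces $C(1)=0$, is the standard argument and is complete. For (2), the explicit intertwiner $S=\bigl[\begin{smallmatrix}\id&-\xi\\0&1\end{smallmatrix}\bigr]$ settles the forward direction, and your treatment of the converse is careful where most write-ups are silent: the comparison of upper-right corners immediately gives the coboundary when the scalar corner $d$ of the intertwiner is nonzero, and your reduction of the degenerate case $d=0$ is sound --- there $\phi$ is a nonzero invariant functional, adjointability of the intertwiner (which is the right reading here, since the paper defines $\End(\mathcal{H})$ as the \emph{adjointable} maps on the preHilbert space $\mathcal{H}$) represents it as $\phi=\langle\zeta,\bullet\rangle$, the intertwining relation plus $\pi(x)^{\ast}=\pi(x^{\ast})$ and $\epsilon(x^{\ast})=\overline{\epsilon(x)}$ makes $\zeta$ an $\epsilon$-eigenvector, and composing with the automorphism $\bigl[\begin{smallmatrix}\id&\zeta\\0&1\end{smallmatrix}\bigr]$ of $\pi\oplus\epsilon$ produces a nonzero corner $\|\zeta\|^{2}$. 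The only caveat worth flagging is the one you already flag yourself: if $\simeq$ were read as mere linear (non-adjointable) equivalence, the Riesz-type representation of $\phi$ would not be available on a preHilbert space; under the paper's conventions this issue does not arise, so your proof stands as a complete substitute for the omitted one.
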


\begin{Rem}
%It is easy to construct coboundaries.
%In the theory of $1$-cocycles for groups, we have $1$-cocycle is bounded if and only if it is a coboundary. In fact, if $\mathcal{A}$ is a CQG algebra and $\mathcal{B}$ is a unital left coideal $*$-subalgebra of $\mathcal{A}$, then $C$ is bounded linear operator if and only if it is a coboundary. Indeed, one can apply the argument in the Appendix C. of \cite{SS}.
For a $1$-cocycle $C$ on a unital left (or right) coideal $\ast$-subalgebra $\mathcal{B}$ of a CQG Hopf $\ast$-algebra $\mathcal{A}$, the $1$-cocycle $C$ is coboundary if and only if it is bounded. See \cite[Theorem 1.2.]{SS}.
\end{Rem}
\begin{defn}[The Yetter--Drinfeld condition]
\label{YD}Let $\mathcal{A}$ be a CQG Hopf $\ast$-algebra. Let $\mathcal{B}\subset\mathcal{A}$ be a unital left coideal $\ast$-subalgebra of $\mathcal{A}$. Let $\mathcal{I}\subset\mathcal{U}$ be the stabilizer dual of $\mathcal{B}$. Let $\mathcal{J}(\subset\mathcal{I}\subset\mathcal{U})$ be a unital right coideal $\ast$-subalgebra of $\mathcal{U}$. We consider the Drinfeld double coideal $\mathcal{B} \bowtie \mathcal{J}$. Let $\pi \colon  \mathcal{B} \bowtie \mathcal{J} \to \End(\mathcal{H})$ be a $*$-representation. We say $(\pi , \epsilon)$-$1$-cocycle $C \colon  \mathcal{B} \to \mathcal{H}$ satisfies the \emph{Yetter--Drinfeld condition} if $C$ satisfies $\pi(\omega)C(x)=C(x \lhd S^{-1}(\omega))$ for all $x \in \mathcal{B}$ and all $ \omega \in \mathcal{J}$.
\end{defn}
Let us prove that Yetter--Drinfeld $1$-cocycles on $\mathcal{B}$ extends to the Drinfeld double coideal $\mathcal{B}\bowtie\mathcal{I}_c$.
\begin{thm}\label{mainext}
Let $\mathcal{A}$ be a CQG Hopf $*$-algebra and $\mathcal{B}$ be a unital left coideal $*$-subalgebra of $\mathcal{A}$. Let $\mathcal{I}_c$ be the stabilizer dual of $\mathcal{B}$. Suppose $(\pi,\mathcal{H})$ is a $*$-representation of $\mathcal{B}\bowtie\mathcal{I}_c$ and $C$ is a $(\pi, \epsilon)$-$1$-cocycle of $\mathcal{B}$. Then the following are equivalent.\\
$(1)$ The cocycle $C$ satisfies the Yetter--Drinfeld condition.\\
$(2)$ The cocycle $C$ extends to a $1$-cocycle $\Tilde{C}$ on the Drinfeld double $\mathcal{B} \bowtie \mathcal{I}_c$ such that %$\Tilde{C}(\omega x)=\pi(\omega)C(x)$ and
$\Tilde{C}(x\omega)=C(x)\epsilon(\omega)$ for all $x \in \mathcal{B} $ and $\omega \in \mathcal{I}_c$. 
\end{thm}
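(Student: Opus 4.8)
The plan is to reduce everything to the triangular characterization of Proposition~\ref{char}: a linear map is a $1$-cocycle precisely when the associated map $\tilde\pi=\begin{bmatrix}\pi & \tilde C\\ 0 & \epsilon\end{bmatrix}$ is a $*$-representation. For $(1)\Rightarrow(2)$ I would \emph{define} $\tilde C$ on the standard-form spanning elements by $\tilde C(b\omega)\coloneqq C(b)\epsilon(\omega)$ for $b\in\mathcal{B}$, $\omega\in\mathcal{I}_c$. This is well defined once one uses that multiplication gives a linear isomorphism $\mathcal{B}\otimes\mathcal{I}_c\xrightarrow{\ \sim\ }\mathcal{B}\bowtie\mathcal{I}_c$ (part of the structure theory of Drinfeld double coideals). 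Setting $\omega=1$ shows $\tilde C$ restricts to $C$ on $\mathcal{B}$, so it is genuinely an extension; moreover $\tilde C(1)=C(1)=0$ and $\tilde C(\omega)=\tilde C(1\cdot\omega)=C(1)\epsilon(\omega)=0$ for $\omega\in\mathcal{I}_c$.

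To verify that $\tilde C$ is a $(\pi,\epsilon)$-$1$-cocycle I would invoke the elementary fact that a unital linear map out of an algebra generated by a set $G$ is multiplicative as soon as $\tilde\pi(g\,z)=\tilde\pi(g)\tilde\pi(z)$ holds for every generator $g\in G$ and every $z$ (a one-line induction on the length of $z$ as a word in $G$). Since $\mathcal{B}\bowtie\mathcal{I}_c$ is generated by $\mathcal{B}$ and $\mathcal{I}_c$ and is spanned by the elements $b\omega'$, it suffices to check the cocycle identity $\tilde C(g\cdot b\omega')=\pi(g)\tilde C(b\omega')+\tilde C(g)\epsilon(b\omega')$ for $g\in\mathcal{B}\cup\mathcal{I}_c$. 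For $g=b'\in\mathcal{B}$ the product $b'b$ is already in standard form and the identity is immediate from the cocycle identity for $C$ on $\mathcal{B}$. The one substantial case is $g=\omega\in\mathcal{I}_c$: here I would use the commutation relation $\omega b=(b\lhd S^{-1}(\omega_{(2)}))\omega_{(1)}$ of the Drinfeld double coideal to put $\omega\,b\,\omega'$ into standard form, and contract the counit on the first leg of $\omega$, obtaining $\tilde C(\omega b\omega')=C(b\lhd S^{-1}(\omega))\epsilon(\omega')$. The Yetter--Drinfeld condition then rewrites the right-hand side as $\pi(\omega)C(b)\epsilon(\omega')$, which equals $\pi(\omega)\tilde C(b\omega')+\tilde C(\omega)\epsilon(b\omega')$ because $\tilde C(\omega)=0$. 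This is exactly the place where hypothesis $(1)$ is indispensable.

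For the converse $(2)\Rightarrow(1)$ I would run the same computation in reverse. Given $\tilde C$ as in $(2)$, for $\omega\in\mathcal{I}_c$ and $x\in\mathcal{B}$ the cocycle identity together with $\tilde C(\omega)=0$ gives $\tilde C(\omega x)=\pi(\omega)C(x)$; on the other hand, rewriting $\omega x=(x\lhd S^{-1}(\omega_{(2)}))\omega_{(1)}$ in standard form and applying $\tilde C(x\omega)=C(x)\epsilon(\omega)$ gives $\tilde C(\omega x)=C(x\lhd S^{-1}(\omega))$. Equating the two expressions yields precisely $\pi(\omega)C(x)=C(x\lhd S^{-1}(\omega))$, the Yetter--Drinfeld condition.

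I expect the main difficulty to be organizational rather than conceptual. The key point is the reduction that lets me test multiplicativity of $\tilde\pi$ only against generators on the \emph{left} times arbitrary standard-form elements, so that I never have to expand a product of two genuinely mixed elements $b_1\omega_1$ and $b_2\omega_2$ by hand; once this reduction and the well-definedness $\mathcal{B}\bowtie\mathcal{I}_c\cong\mathcal{B}\otimes\mathcal{I}_c$ are secured, the single relation $\omega b=(b\lhd S^{-1}(\omega_{(2)}))\omega_{(1)}$ followed by the counit contraction does all the work, and the Yetter--Drinfeld identity is exactly what is needed to close the one nontrivial case and, read backwards, to recover $(1)$ from $(2)$.
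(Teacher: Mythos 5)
Your direction $(1)\Rightarrow(2)$ is correct and is essentially the paper's own argument: well-definedness via the linear isomorphism $\mathcal{B}\otimes\mathcal{I}_c\simeq\mathcal{B}\bowtie\mathcal{I}_c$, the commutation relation $\omega b=(b\lhd S^{-1}(\omega_{(2)}))\omega_{(1)}$, contraction of the counit on the first leg, and the Yetter--Drinfeld identity invoked at exactly the same spot; the paper simply verifies the cocycle identity on a product of two general standard-form elements $(x\omega)(y\nu)$ in one chain instead of reducing to left multiplication by generators, and the two organizations are interchangeable. The genuine gap is in $(2)\Rightarrow(1)$: you apply the cocycle identity to the pair $(\omega,x)$ with $x\in\mathcal{B}$ and use $\tilde{C}(x)=C(x)$. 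But $\mathcal{I}_c=\bigoplus_{\beta}\mathbb{B}(G_{\beta})$ has no unit (in the Podle\'{s} case it is $\bigoplus_{n\in\mathbb{Z}}\mathbb{B}(\mathbb{C}_n)$), so $\mathcal{B}\bowtie\mathcal{I}_c=\operatorname{span}\{b\omega\mid b\in\mathcal{B},\,\omega\in\mathcal{I}_c\}\simeq\mathcal{B}\otimes\mathcal{I}_c$ does \emph{not} contain $\mathcal{B}$: a nonzero $x$, corresponding to $x\otimes 1$, cannot equal $\sum_i b_i\otimes\omega_i$ with $\omega_i\in\mathcal{I}_c$ because $1\notin\mathcal{I}_c$. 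Hence $\tilde{C}(x)$ is undefined, and your identity $\tilde{C}(\omega x)=\pi(\omega)\tilde{C}(x)+\tilde{C}(\omega)\epsilon(x)$ is not an instance of the cocycle identity on the algebra where $\tilde{C}$ lives. (The same oversight appears, harmlessly, when you "set $\omega=1$" in $(1)\Rightarrow(2)$ to see that $\tilde{C}$ restricts to $C$; there is no such $\omega$ in $\mathcal{I}_c$, and condition $(2)$ only asserts the formula $\tilde{C}(x\omega)=C(x)\epsilon(\omega)$.)

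This non-unitality is precisely what the paper's proof of $(2)\Rightarrow(1)$ spends its effort on: it first extends $\tilde{C}$ to a map $\tilde{C}'$ on $\mathcal{B}\bowtie\mathcal{I}$ by $\tilde{C}'(x\omega)\coloneqq\lim_{\lambda}\tilde{C}(x\omega_{\lambda})$ along nets $\omega_{\lambda}\to\omega$ in $\mathcal{I}_c$ (pointwise convergence, so $\epsilon(\omega_{\lambda})\to\epsilon(\omega)$), which makes $\tilde{C}'(x)=C(x)$ meaningful, and only then runs your computation with $\tilde{C}'$ in place of $\tilde{C}$. Alternatively, there is a cheaper repair that stays inside $\mathcal{B}\bowtie\mathcal{I}_c$ and avoids limits altogether: test against $x\Phi_{\mathcal{C}}$ rather than $x$, where $\Phi_{\mathcal{C}}\in\mathcal{I}_c$ is the support projection of $\epsilon$ with $\epsilon(\Phi_{\mathcal{C}})=1$. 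The cocycle identity applied to the pair $(\omega,\,x\Phi_{\mathcal{C}})$, both of which lie in the algebra, gives $\tilde{C}(\omega x\Phi_{\mathcal{C}})=\pi(\omega)\tilde{C}(x\Phi_{\mathcal{C}})+\tilde{C}(\omega)\epsilon(x\Phi_{\mathcal{C}})=\pi(\omega)C(x)$, while rewriting $\omega x\Phi_{\mathcal{C}}=(x\lhd S^{-1}(\omega_{(2)}))\,\omega_{(1)}\Phi_{\mathcal{C}}$ in standard form and applying hypothesis $(2)$ gives $\tilde{C}(\omega x\Phi_{\mathcal{C}})=C(x\lhd S^{-1}(\omega_{(2)}))\epsilon(\omega_{(1)})\epsilon(\Phi_{\mathcal{C}})=C(x\lhd S^{-1}(\omega))$; equating the two expressions yields the Yetter--Drinfeld condition. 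With either patch your argument closes; as written, it assumes the very point that makes this direction nontrivial.
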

\begin{proof}
$(1)\Rightarrow (2)$: Let $\Tilde{C}$ be a linear map such that %$\Tilde{C}(\omega x)=\pi(\omega)C(x)$ and
$\Tilde{C}(x\omega)=C(x)\epsilon(\omega)$ for $x \in \mathcal{B} $ and $\omega \in \mathcal{I}_c$. %At first we show that $\Tilde{C}$ is well-defined, namely, $\Tilde{C}(\omega x)=\Tilde{C}((x\lhd S^{-1}(\omega_{(2)}))\omega_{(1)})$ and $\Tilde{C}(x \omega)=\Tilde{C}(\omega_{(1)}(x\lhd \omega_{(2)}))$ for all $x \in \mathcal{B}$ and all $ \omega \in \mathcal{I}$. We can show this by direct computations: Let $x \in \mathcal{B} $ and $\omega \in \mathcal{I}$. Then we have $\Tilde{C}(\omega x)=\pi(\omega)C(x)=\Tilde{C}(x\lhd S^{-1}(\omega))=\Tilde{C}((x\lhd S^{-1}(\omega_{(2)})))\epsilon(\omega_{(1)})=\Tilde{C}((x\lhd S^{-1}(\omega_{(2)}))\omega_{(1)})$ and $\Tilde{C}(x\omega)=C(x)\epsilon(\omega)=C(x\lhd \omega_{(2)} \lhd S^{-1}(\omega_{(1)}))=\pi(\omega_{(1)})C(x\lhd \omega_{(2)})=\Tilde{C}(\omega_{(1)}(x \lhd \omega_{(2)}))$. Next, we show that $\Tilde{C}$ is a $1$-cocycle. This also can be proved by direct computations: 
Let $x, y \in \mathcal{B} $ and $\omega, \nu \in \mathcal{I}_c$. Then we have
\begin{align*}
\Tilde{C}(x\omega y\nu)&=\Tilde{C}(x(y\lhd S^{-1}(\omega_{(2)}))\omega_{(1)}\nu)\\
&=C(x(y\lhd S^{-1}(\omega_{(2)}))\epsilon(\omega_{(1)})\epsilon(\nu)\\
&=C(x(y\lhd S^{-1}(\omega)))\epsilon(\nu)\\
&=\pi(x)C(y\lhd S^{-1}(\omega))\epsilon(\nu)+C(x)\epsilon(y)\epsilon(\omega)\epsilon(\nu)\\
&=\pi(x\omega)\Tilde{C}(y\nu)+\Tilde{C}(x\omega)\epsilon(y\nu).
\end{align*}
Hence $\Tilde{C}$ is a $1$-cocycle with the desired property.\\
$(2)\Rightarrow (1)$: Note that $\mathcal{I}_c$ is dense in $\mathcal{I}$ with respect to the topology of pointwise convergence. Define $\Tilde{C}'(x\omega)\coloneqq\lim_{\lambda}\Tilde{C}(x\omega_{\lambda})=\lim_{\lambda}C(x)\epsilon(\omega_{\lambda})$, where $x\in\mathcal{B}$, $\omega\in\mathcal{I}$ and $(\omega_{\lambda})_{\lambda}\subset\mathcal{I}_c$ is a net conveging to $\omega$. Since $\omega_{\lambda}\to\omega$, we have $\epsilon(\omega_{\lambda})=\omega_{\lambda}(1)\to\omega(1)=\epsilon(\omega)$. Thus the limit exists and is independent of the choice of nets. This $\Tilde{C}'$ is an extension of $\Tilde{C}$ and $C$. Let $x \in \mathcal{B}$ and $ \omega \in \mathcal{I}_c$. Then
\begin{align*}
\pi(\omega)C(x)&=\pi(\omega)\Tilde{C}'(x)\\
&=\pi(\omega)\Tilde{C}'(x)+\Tilde{C}'(\omega)\epsilon(x)\\
&=\Tilde{C}'(\omega x)\\
&=\Tilde{C}(\omega x)\\
&=\Tilde{C}((x\lhd S^{-1}(\omega_{(2)}))\omega_{(1)})\\
&=C(x\lhd S^{-1}(\omega_{(2)}))\epsilon(\omega_{(1)})\\
&=C(x\lhd S^{-1}(\omega)).
\end{align*}
This completes the proof.
\end{proof}
\begin{Rem}
Let $C, \Tilde{C}$ and $\Tilde{C}'$ be as in Proposition \ref{mainext}. Then $1$-cocycle $\Tilde{C}$ vanishes on $\mathcal{I}_c$. Indeed, if $\omega\in\mathcal{I}_c$, then
\begin{align*}
    \Tilde{C}(\omega)&=C(1\omega)=C(1)\epsilon(\omega)=0.
\end{align*}
We also have
\begin{align*}
\Tilde{C}(\omega x)&=
\pi(\omega)\Tilde{C}'(x)+\Tilde{C}'(\omega)\epsilon(x)=\pi(\omega)C(x),
\end{align*}
where $x\in\mathcal{B}$ and $\omega\in\mathcal{I}_c$.
\end{Rem}
\begin{prop}\label{iff}
Let $\mathcal{B}$, $\mathcal{I}_c$, $\pi$ and $C$ be as in Proposition \ref{mainext}. Then $C$ is a coboundary if $\Tilde{C}$ is a coboundary.
\end{prop}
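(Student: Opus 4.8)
The plan is to manufacture an explicit coboundary vector for $C$ out of the one witnessing that $\Tilde{C}$ is a coboundary, using the support projection $\Phi_{\mathcal{C}}\in\mathcal{I}_c$ as a counital substitute for a unit. Recall from the construction of the stabilizer dual that $\Phi_{\mathcal{C}}$ is the self-adjoint support projection of $\epsilon\colon\mathcal{I}\to\mathbb{C}$, so that $\Phi_{\mathcal{C}}\in\mathcal{I}_c$ and $\epsilon(\Phi_{\mathcal{C}})=1$; moreover, by Theorem \ref{mainext}, the extension $\Tilde{C}$ satisfies the defining identity $\Tilde{C}(x\omega)=C(x)\epsilon(\omega)$ for all $x\in\mathcal{B}$ and $\omega\in\mathcal{I}_c$.

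Assume $\Tilde{C}$ is a coboundary, so there is a vector $\xi\in\mathcal{H}$ with $\Tilde{C}(z)=\pi(z)\xi-\epsilon(z)\xi$ for every $z\in\mathcal{B}\bowtie\mathcal{I}_c$. I would then set $\xi'\coloneqq\pi(\Phi_{\mathcal{C}})\xi\in\mathcal{H}$ and evaluate $\Tilde{C}$ on the element $x\Phi_{\mathcal{C}}\in\mathcal{B}\bowtie\mathcal{I}_c$ in two ways. On one hand, the defining identity gives $\Tilde{C}(x\Phi_{\mathcal{C}})=C(x)\epsilon(\Phi_{\mathcal{C}})=C(x)$. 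On the other hand, using the coboundary form of $\Tilde{C}$ together with the facts that $\pi$ is an algebra homomorphism and $\epsilon$ is multiplicative, one computes
\begin{align*}
\Tilde{C}(x\Phi_{\mathcal{C}})&=\pi(x\Phi_{\mathcal{C}})\xi-\epsilon(x\Phi_{\mathcal{C}})\xi\\
&=\pi(x)\pi(\Phi_{\mathcal{C}})\xi-\epsilon(x)\epsilon(\Phi_{\mathcal{C}})\xi\\
&=\pi(x)\xi'-\epsilon(x)\xi'.
\end{align*}
Comparing the two expressions yields $C(x)=\pi(x)\xi'-\epsilon(x)\xi'$ for all $x\in\mathcal{B}$, which is precisely the assertion that $C$ is a coboundary with respect to $\pi|_{\mathcal{B}}$, with coboundary vector $\xi'=\pi(\Phi_{\mathcal{C}})\xi$.

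Since the argument reduces to a two-line verification, the only point requiring care — and the closest thing to an obstacle — is confirming that $\Phi_{\mathcal{C}}$ genuinely behaves as a counital element, namely that $\Phi_{\mathcal{C}}\in\mathcal{I}_c$ and $\epsilon(\Phi_{\mathcal{C}})=1$. Both are immediate from the definition of $\Phi_{\mathcal{C}}$ as the support projection of the character $\epsilon$ on $\mathcal{I}$, so no real difficulty arises. I would also remark that the coboundary produced is taken with respect to the restriction $\pi|_{\mathcal{B}}$, which is the representation relevant to the notion of $1$-cocycle on $\mathcal{B}$, so the conclusion matches Definition of coboundary exactly.
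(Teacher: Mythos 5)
Your strategy is the same as the paper's---evaluate $\Tilde{C}$ on $x\Phi_{\mathcal{C}}$ in two ways---but the last equality in your displayed computation is unjustified, and it is exactly where the content of the proposition sits. Using $\epsilon(\Phi_{\mathcal{C}})=1$, what your second line actually gives is $\Tilde{C}(x\Phi_{\mathcal{C}})=\pi(x)\pi(\Phi_{\mathcal{C}})\xi-\epsilon(x)\xi=\pi(x)\xi'-\epsilon(x)\xi$; you then silently replace $\epsilon(x)\xi$ by $\epsilon(x)\xi'$. Multiplicativity of $\epsilon$ gives no such thing: the expression $\pi(x)\xi'-\epsilon(x)\xi$ is of coboundary form only if the two vectors coincide, i.e.\ only if $\pi(\Phi_{\mathcal{C}})\xi=\xi$, and nothing in your argument establishes this. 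Your closing remark that the only point requiring care is $\Phi_{\mathcal{C}}\in\mathcal{I}_c$ and $\epsilon(\Phi_{\mathcal{C}})=1$ misidentifies the obstacle: those facts are indeed immediate, whereas the equality $\pi(\Phi_{\mathcal{C}})\xi=\xi$ is the step that actually needs an argument.

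The gap is small and is closed by precisely the step the paper performs first: $\Tilde{C}$ vanishes on $\mathcal{I}_c$, since $\Tilde{C}(\omega)=C(1)\epsilon(\omega)=0$ (a $1$-cocycle kills the unit), so evaluating the coboundary expression at $\Phi_{\mathcal{C}}$ gives $0=\Tilde{C}(\Phi_{\mathcal{C}})=\pi(\Phi_{\mathcal{C}})\xi-\epsilon(\Phi_{\mathcal{C}})\xi=\xi'-\xi$, hence $\xi'=\xi$, after which your display is correct (the paper then simply uses $\xi$ itself as the coboundary vector). Alternatively, your own correct intermediate identity $C(x)=\pi(x)\xi'-\epsilon(x)\xi$, evaluated at $x=1$ together with $C(1)=0$, yields $\xi'=\xi$. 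Either one-line addition repairs the proof; as written, the key step is asserted rather than proved.
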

\begin{proof}Suppose $\Tilde{C}$ is a coboundary, then there exists some $\xi\in\mathcal{H}$ such that $\Tilde{C}(x\omega)=\pi(x\omega)\xi-\epsilon(x\omega)\xi$ for all $x\in\mathcal{B}$ and $\omega\in\mathcal{I}_c$. We have
\begin{align*}
0=\Tilde{C}(\Phi_{\mathcal{C}})=\pi(\Phi_{\mathcal{C}})\xi-\epsilon(\Phi_{\mathcal{C}})\xi.
\end{align*}
Hence by the fact that $\epsilon(\Phi_{\mathcal{C}})=1$ we have $\pi(\Phi_{\mathcal{C}})\xi=\xi$.
Then, by the Yetter--Drinfeld condition, we have 
\begin{align*}
C(x)=\Tilde{C}(x)\epsilon(\Phi_{\mathcal{C}})=\Tilde{C}(x\Phi_{\mathcal{C}})=\pi(x\Phi_{\mathcal{C}})\xi-\epsilon(x\Phi_{\mathcal{C}})\xi=\pi(x)\xi-\epsilon(x)\xi.
\end{align*}
This means that $C$ is a coboundary.
\end{proof}
%\begin{Rem}
 %   The reverse implication of the proposition above is true at least in the case $\mathcal{B}=\mathcal{O}_q(S_t^2)$ and $(\mathcal{H}, \pi)$ is an $SL(2,\mathbb{R})_t$-admissible $\ast$-representation. Indeed, let $C\colon\mathcal{B}\to\mathcal{H}$ be a coboundary, then there exists a vector $\xi\in\mathcal{H}$ such that $C(\bullet)=\pi(\bullet)-\epsilon(\bullet)$ on $\mathcal{B}$. Extend $C$ to the $1$-cocycle $\Tilde{C}$ on $\mathcal{B}\bowtie\mathcal{I}_c$. We have
  %  \begin{align*}
   %     C(T_a^{\pm})=\Tilde{C}(T_a^{\pm}\Phi_{\mathcal{C}})=0.
    %\end{align*}
    %We also have
    %\begin{align*}
%\pi(A_a)\xi-(q+q^{-1})\xi&=C(A_a)=\Tilde{C}(A_a\Phi_{\mathcal{C}})=\Tilde{C}(\Phi_{\mathcal{C}}A_a)\\
%&=\pi(\Phi_{\mathcal{C}})C(A_a)=\pi(\Phi_{\mathcal{C}})(\pi(A_a)\xi-(q+q^{-1})\xi).
 %   \end{align*}
  %  This means that $C(A_a)=\pi(A_a)\pi(\Phi_{\mathcal{C}})\xi-(q+q^{-1})\pi(\Phi_{\mathcal{C}})\xi$. Hence $C(b)=\pi(b)\xi_0-\epsilon(b)\xi_0$, where $b=T_a^{\pm}$ and $\xi_0\coloneqq\pi(\Phi_{\mathcal{C}})\xi$. Now it follows that $\Tilde{C}(bx)=\pi(bx)\xi_0-\epsilon(bx)\xi_0$, where $b\in\mathcal{B}$ and $x\in\mathcal{I}_c$.
%\end{Rem}
\section{A $1$-cocycle on $SL_q(2,\mathbb{R})$ }
In this section we costruct $1$-cocycles on $SL_q(2,\mathbb{R})$ which is not a coboundary.
\subsection{Decomposing $\mathcal{M}_{q+q^{-1},a}$ into irreducibles}
In order to construct $1$-cocycles, we decompose $\mathcal{M}_{q+q^{-1},a}$ into the direct sum of $\mathcal{D}_{2}^{+}$, $\mathcal{D}_{2}^{-}$ and $\mathds{1}$ as $\mathcal{I}_c$-representations.
\begin{lem}\label{exactseq} We have an exact sequence 
\[0\to   \mathcal{D}_{2}^{+} \oplus \mathcal{D}_{2}^{-} \xrightarrow{\phi}  \mathcal{M}_{q+q^{-1},a}  \xrightarrow{p} 
  \mathds{1} \to  0\]
  in the representation category of $\mathcal{O}_q(S^{2}_t) \bowtie \mathcal{I}_c$, where $\phi$ is the injection $e_{a+2n} \mapsto e_{a+2n} (n\in \mathbb{Z} \setminus{\{0\}})$ and $\pi$ is the canonical projection.
\end{lem}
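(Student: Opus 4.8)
The plan is to check that $\phi$ and $p$ are morphisms in the representation category of $\mathcal{O}_q(S_t^2)\bowtie\mathcal{I}_c$ and that the sequence is exact at each term. By the description $\mathcal{O}_q(S_t^2)=\{T^{\pm,(k)}_{a+n}P(A_{a+n})\}$, the algebra $\mathcal{O}_q(S_t^2)$ is generated by the operators $T^{\pm}_c$ and $A_c$, while $\mathcal{I}_c=\bigoplus_n\mathbb{B}(\mathbb{C}_n)$ acts through the projections $\Phi_n$ onto the $[a+2n]$-eigenspaces of $\sqrt{-1}B_t$. All three modules are graded by these eigenvalues and both $\phi$ and $p$ preserve the grading, so compatibility with $\mathcal{I}_c$ is automatic and only the $T^{\pm}_c$- and $A_c$-action has to be analysed.

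The crucial computation is that in $\mathcal{M}_{q+q^{-1},a}$ one has $T^-_{a+2}e_{a+2}=0$ and $T^+_{a-2}e_{a-2}=0$. Both follow by evaluating the quadratic relations $T^-_{c+2}T^+_c=(\{c-a+1\}-A_c)(\{c+a+1\}+A_c)$ and $T^+_{c-2}T^-_c=(\{c-a-1\}-A_c)(\{c+a-1\}+A_c)$ at $c=a$ on the cyclic vector: since $A_ae_a=(q+q^{-1})e_a=\{1\}e_a=\{-1\}e_a$, the first factor vanishes in each case, giving $T^-_{a+2}e_{a+2}=T^-_{a+2}T^+_ae_a=0$ and $T^+_{a-2}e_{a-2}=T^+_{a-2}T^-_ae_a=0$. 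These two vanishings are exactly what prevents the lowering operator $T^-$ from carrying $e_{a+2}$, and the raising operator $T^+$ from carrying $e_{a-2}$, back to the weight-$[a]$ line $\mathbb{C}e_a$. Together with the quadratic relations above and $A_{c\pm2}T^{\pm}_c=T^{\pm}_cA_c$ they show that $\operatorname{span}\{e_{a+2n}\mid n\geq1\}$ and $\operatorname{span}\{e_{a-2n}\mid n\geq1\}$ are each invariant under all generators, so their direct sum $\operatorname{span}\{e_{a+2n}\mid n\neq0\}$ is a submodule.

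I would then identify the two chains with $\mathcal{D}_2^{\pm}$. On each chain the $A$-eigenvalues and all matrix coefficients of $T^{\pm}_c$ are completely forced by the quadratic and intertwining relations together with the boundary data (the $A$-eigenvalue $q+q^{-1}$ and the lowest/highest-weight conditions at $e_{a\pm2}$), and these coincide with the defining data of $\mathcal{D}_2^+$ and $\mathcal{D}_2^-$; hence the grading-preserving bijection $e_{a+2n}\mapsto e_{a+2n}$ is an intertwiner, and $\phi$ is a well-defined injective morphism with image $\operatorname{span}\{e_{a+2n}\mid n\neq0\}$. For $p$, the same boundary computation gives $T^+_ae_a=e_{a+2}$ and $T^-_ae_a=e_{a-2}$, both in $\operatorname{Im}\phi$, while $A_ae_a=(q+q^{-1})e_a$ and $\Phi_0e_a=e_a$; thus in $\mathcal{M}/\operatorname{Im}\phi$ the class $\bar e_a$ satisfies $T^{\pm}_a\bar e_a=0$, $A_a\bar e_a=(q+q^{-1})\bar e_a$ and $\Phi_0\bar e_a=\bar e_a$, which is precisely the trivial representation $\mathds{1}=\mathcal{L}_{q+q^{-1},a}$. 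So $p$ is a surjective morphism with $\ker p=\operatorname{Im}\phi$, giving exactness; injectivity of $\phi$ and surjectivity of $p$ are immediate from the basis $\{e_{a+2n}\}_{n\in\mathbb{Z}}$.

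The one genuinely non-formal point, and the main obstacle, is the identification of each invariant chain with the \emph{irreducible} module $\mathcal{D}_2^{\pm}=\mathcal{L}_{q+q^{-1},a\pm2}$ carrying its own normalization $\langle T^{\pm}_ae_a,T^{\pm}_ae_a\rangle_{\mathcal{D}_2^{\pm}}=1$, rather than merely with a graded vector space. Indeed the $\ast$-form of $\mathcal{M}_{q+q^{-1},a}$ is degenerate---its radical is exactly the codimension-one submodule $\operatorname{span}\{e_{a+2n}\mid n\neq0\}$, since $\mathcal{M}/\mathcal{N}=\mathds{1}$ is one dimensional---so $\phi$ cannot be isometric onto its image. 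This causes no trouble, because a morphism in the representation category is only required to intertwine the action and not to preserve the inner product; the intertwining is guaranteed by the fact that the relations pin down all matrix coefficients, so the abstract $\mathcal{D}_2^{\pm}$ and the chains inside $\mathcal{M}_{q+q^{-1},a}$ carry identical actions.
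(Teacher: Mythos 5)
Your proof is correct, and its core step---showing that $\spn\{e_{a+2n}\mid n\geq 1\}$ and $\spn\{e_{a+2n}\mid n\leq -1\}$ are submodules via the vanishings $T^{-}_{a+2}e_{a+2}=0$ and $T^{+}_{a-2}e_{a-2}=0$, obtained from the quadratic relations evaluated against $A_ae_a=(q+q^{-1})e_a$---is exactly the computation in the paper, which phrases it as $T^{-}_{a+2n}e_{a+2n}=(\{2n-1\}-(q+q^{-1}))(\{2a+2n-1\}+q+q^{-1})e_{a+2n-2}$, zero precisely when $n=1$. (Incidentally, your boundary condition $T^{+}_{a-2}e_{a-2}=0$ for $\mathcal{D}_2^{-}$ is the right one; the listing in Subsection 2.4 of the paper, which states $T^{-}_{a-2}e_{a-2}=0$, is apparently a typo.) Where you genuinely diverge is in identifying the two chains with $\mathcal{D}_2^{\pm}$. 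The paper argues categorically: since $\mathcal{K}^{+}\coloneqq\spn\{e_{a+2n}\mid n\geq1\}$ is a $(q+q^{-1},a+2)$-basic module, Proposition \ref{univ} yields a surjective intertwiner $Q^{+}\colon\mathcal{M}_{q+q^{-1},a+2}\twoheadrightarrow\mathcal{K}^{+}$; one checks that $Q^{+}$ kills $\ker(Q\colon\mathcal{M}_{q+q^{-1},a+2}\twoheadrightarrow\mathcal{D}_2^{+})$, so it factors through a surjection $\pi^{+}\colon\mathcal{D}_2^{+}\twoheadrightarrow\mathcal{K}^{+}$, which is injective because $\mathcal{D}_2^{+}$ is irreducible. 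You instead match structure constants in the weight bases directly. That route is valid, but it glosses over one point: for $e_{a+2n}\mapsto e_{a+2n}$ to be a well-defined bijection you need that the chain $T^{+,(n)}e_{a+2}$ inside $\mathcal{D}_2^{+}$ itself never vanishes, which is not among the ``defining data'' you cite. It does follow from your own relations---if $\bar{e}_{a+2N}=0$ with $N\geq2$ minimal, then $0=T^{-}_{a+2N}\bar{e}_{a+2N}=(\{2N-1\}-\{1\})(\{2N+2a-1\}+\{1\})\bar{e}_{a+2N-2}$ would force $\bar{e}_{a+2N-2}=0$, a contradiction since the scalar is nonzero---but this should be said explicitly. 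The trade-off between the two approaches: the paper gets injectivity for free from irreducibility at the cost of invoking the universality of basic modules, whereas your argument is more elementary and makes the module structure of $\ker p$ completely explicit. Your two side observations---that $\mathcal{I}_c$-equivariance is automatic from the weight grading, and that $\phi$ cannot be isometric because the form on $\mathcal{M}_{q+q^{-1},a}$ is degenerate with radical $\ker p$---are both correct and are left implicit in the paper.
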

\begin{proof}
Since $\mathds{1}$ is a $(q+q^{-1}, a)$-basic module, we apply \cite[Proposition 3.11.]{DCDz1} to obtain a surjective intertwiner $p\colon\mathcal{M}_{q+q^{-1},a}\rightarrow\mathds{1}$. We have
\begin{align*}
\mathds{1}=\mathcal{M}_{q+q^{-1},a}/\ker(p).
\end{align*}
We put 
\begin{align*}
\mathcal{K}^{+}&\coloneqq\spn\{e_{a+2n}(\coloneqq T_{a}^{+,(n)}e_a)\in \ker(p)\mid n>0\},\\
\mathcal{K}^{-}&\coloneqq\spn\{e_{a+2n}(\coloneq T_{a}^{-,(-n)}e_a)\in \ker(p)\mid n<0\}.
\end{align*}
Note that the vector spaces $\mathcal{K}^{\pm}$ are submodules. %Indeed, we have
%\begin{align*}
%T^{-}_{a+2}e_{a+2}&=T^{-}_{a+2}T^{+}_{a}e_a\\
%&=(\{a-a+1\}-A_a)(\{a+a+1\}+A_a)e_a\\
%&=(q+q^{-1}-q-q^{-1})(\{2a+1\}+q+q^{-1})e_a\\
%&=0.
%\end{align*}
%In a similar way, we have $T^{+}_{a-2}e_{a-2}=0$. It is clear that $A_{a\pm2}e_{a\pm2}\in\mathcal{K}^{\pm}$ and $T^{\pm}_{a\pm2}e_{a\pm2}\in\mathcal{K}^{\pm}$. Since for any $c\in\mathbb{R}$, the Podle\'{s} sphere $\mathcal{O}_q(S_t^2)$ is generated by the elements $T^{\pm}_{c}$ and $A_{c}$ as an algebra (or we have $\mathcal{O}_q(S_t^2)=\spn\{T_c^{+,(n)}A^k_c,T_c^{-,(n)}A^k_c\mid k,n\in\mathbb{Z}_{\geq0}\}$), we have $xe_{a\pm2n}\in\mathcal{K}^{\pm}$  for any $x\in\mathcal{O}_q(S_t^2)$. It follows that $\mathcal{K}^{\pm}$ are submodules.
Indeed, it suffices to show that $u^1_{\xi,\eta^n_{[a]}}e_{a+2n}\in\mathcal{K}^+$ for all $\xi\in H_1$ and $n>0$. The matrix coefficient $u^1_{\xi,\eta^n_{[a]}}$ is of the form
\[u^1_{\xi,\eta^n_{[a]}}=\alpha T^{+}_{a+2n}+\beta A_{a+2n}+\gamma T^{-}_{a+2n}\] for some $\alpha, \beta$ and $\gamma\in\mathbb{C}$. It is clear that $T^{+}_{a+2n}e_{a+2n}, A_{a+2n}e_{a+2n}\in\mathcal{K}^{+}$ and it follows that
\begin{align*}
    T^{-}_{a+2n}e_{a+2n}&=(\{2n-1\}-(q+q^{-1}))(\{2a+2n-1\}+q+q^{-1})e_{a+2n-2}\\
    &\left\{
\begin{array}{ll}
\in\mathcal{K}^{+} & (n>1) \\
=0 & (n=1)
\end{array}
\right._{.}
\end{align*}
Thus $u^1_{\xi,\eta^n_{[a]}}e_{a+2n}\in\mathcal{K}^+$ and so $\mathcal{K}^{+}$ is a submodule. In a similar way $\mathcal{K}^{-}$ is a submodule. 
We have, by construction, an intertwiner $Q\colon \mathcal{M}_{q+q^{-1}, a+2}\twoheadrightarrow \mathcal{L}_{q+q^{-1}, a+2}=\mathcal{D}_{2}^{+}$. Since $\mathcal{K}^{+}$ is $(q+q^{-1}, a+2)$-basic module, there exists an intertwiner (by \cite[Proposition 3.11.]{DCDz1} again) $Q^{+}\colon\mathcal{M}_{q+q^{-1}, a+2}\twoheadrightarrow  \mathcal{K}^{+}$. Since $\ker(Q)=\spn\{e_{a+2n}\mid n\leq0\}$, any $\xi\in\ker(Q)$ is of the form $\xi=\sum_{n<0}a_{n}e_{a+2n}$ and we have $Q^{+}(\xi)=0$. Then there exists an intertwiner $\pi^{+}\colon\mathcal{L}_{q+q^{-1}, a+2}=\mathcal{D}_{2}^{+}\twoheadrightarrow\mathcal{K}^{+}$. In summary, we have the following commutative diagram.
\[
\xymatrix{
\mathcal{D}_{2}^{+}\ar@{.>}[rd]^-{\pi^{+}}\\
\mathcal{M}_{q+q^{-1}, a+2}\ar[r]^-{Q^{+}}\ar[u]^-{Q}&\mathcal{K}^{+}\\
\ker(Q)\ar[u]
}
\]
Since $\mathcal{D}_{2}^{+}$ is irreducible, the map $\pi^{+}$ is injective. Hence we may consider that $\mathcal{D}_{2}^{+}\subset \mathcal{K}^{+}$ as representations of $\mathcal{O}_q(S^{2}_t) \bowtie \mathcal{I}_c$. In a similar way, we may consider that $\mathcal{D}_{2}^{-}\subset \mathcal{K}^{-}$. Hence $\mathcal{D}_{2}^{+}\oplus\mathcal{D}_{2}^{-}\subset \mathcal{K}^{+}\oplus \mathcal{K}^{-}$. By considering the weights, it follows that $\mathcal{D}_{2}^{+}\oplus\mathcal{D}_{2}^{-}=\mathcal{K}^{+}\oplus \mathcal{K}^{-}=\ker{p}$. This proves that $\phi$ is an injective intertwiner which sends $e_{a+2n}$ to $e_{a+2n} (n\in \mathbb{Z} \setminus{\{0\}})$ and  $\Ima{\phi}=\ker{p}$.
%Next we show the exactness. $\pi \circ \phi=0$ is trivial by definition. Suppose $\pi(x)=0$ and write $x=\sum x_{n}e_{a+2n}$. Then $x_{0}=0$. This shows that $x\in \mathcal{D}_{2}^{+} \oplus \mathcal{D}_{2}^{-}$.
\end{proof}

In the category of vector spaces, the exact sequence above splits. In fact, there exists an isomorphism  $\mathcal{M}_{q+q^{-1},a} \simeq (\mathcal{D}_{2}^{+} \oplus \mathcal{D}_{2}^{-}) \oplus \mathds{1}$ as $\mathcal{I}_c$-representations.
However, the sequence does not split in the category of $ \mathcal{O}_q(S^{2}_t) \bowtie \mathcal{I}_c$. As a final remark in this subsection, we note that $\mathcal{M}_{q+q^{-1},a}$ is not irreducible since $\mathcal{M}_{q+q^{-1},a}$ contains an invariant sub space $\mathcal{D}_{2}^{+} \oplus \mathcal{D}_{2}^{-}$, which contains no nonzero vectors of weight $[a]$.

\subsection{An invariant inner product on $\mathcal{D}_{2}^{+} \oplus \mathcal{D}_{2}^{-}$}
\label{inv}
Let $0<\lambda<q+q^{-1}$ and let $\langle \bullet, \bullet \rangle$ be the inner product on $\mathcal{M}_{\lambda, a}=\mathcal{L}_{\lambda, a}$ such that $\langle e_{a}, e_{a} \rangle=1$. We define \begin{align*}\langle \bullet, \bullet \rangle_{_{_{\lambda}}}\coloneqq \{a+2\}\{a\}^{-1}(q+q^{-1}-\lambda)^{-1}(\{2a+1\}+\lambda)^{-1}\langle \bullet, \bullet \rangle.
\end{align*}
Then it follows that \begin{align*}
&\langle T_{a}^{+}e_{a}, T_{a}^{+}e_{a}\rangle_{_{_{\lambda}}}\\
&=\{a+2\}\{a\}^{-1}(\{1\}-\lambda)^{-1}(\{2a+1\}+\lambda)^{-1}\langle T_{a}^{+}e_{a}, T_{a}^{+}e_{a}\rangle\\
&=\{a+2\}\{a\}^{-1}(\{1\}-\lambda)^{-1}(\{2a+1\}+\lambda)^{-1}\langle T_{a}^{-}T_{a}^{+}e_{a}, e_{a}\rangle\\
&=\{a+2\}\{a\}^{-1}(\{1\}-\lambda)^{-1}(\{2a+1\}+\lambda)^{-1}\{a\}\{a+2\}^{-1}\langle T_{a+2}^{-}T_{a}^{+}e_{a}, e_{a} \rangle\\
&=(\{1\}-\lambda)^{-1}(\{2a+1\}+\lambda)^{-1} \langle (\{a-a+1\}-A_{a})(\{a+a+1\}+A_{a})e_{a}, e_{a} \rangle\\
&=(\{1\}-\lambda)^{-1}(\{2a+1\}+\lambda)^{-1} \langle (\{a-a+1\}-\lambda)(\{a+a+1\}+\lambda)e_{a}, e_{a} \rangle\\
&=1.
\end{align*}
Hence  $\langle \bullet, \bullet \rangle_{_{_{\lambda}}}$ is an invariant inner product on $\mathcal{M}_{\lambda, a}$ such that $\langle T_{a}^{+}e_{a}, T_{a}^{+}e_{a}\rangle_{_{_{\lambda}}}=1$. We compute the limit of this map:
\begin{prop}
\label{inn}
We have\\
\[
\lim_{\lambda \to q+q^{-1}}  \langle \xi, \eta \rangle_{_{_{\lambda}}}=
 \begin{cases}
    \langle \xi, \eta \rangle_{_{_{\mathcal{D}_{2}^{+}}}} & \text{if $\xi, \eta \in \mathcal{D}_{2}^{+} $,} \\
    \infty  & \text{if $\xi, \eta \in \mathcal{N}\setminus\{0\} $,} \\
    \widetilde{\langle \xi, \eta \rangle}_{_{_{\mathcal{D}_{2}^{-}}}} & \text{if $\xi, \eta \in \mathcal{D}_{2}^{-} $,}
  \end{cases}
\]
where $\mathcal{N}$ is the subspace of $\mathcal{M}_{q+q^{-1}, a}$ consisting of the vectors with weight $ [a] $ and $\widetilde{\langle \xi, \eta \rangle}_{_{_{\mathcal{D}_{2}^{-}}}}\coloneqq \{a+2\}\{a-1\}\{a-2\}^{-1}\{a+1\}^{-1}\langle \xi, \eta \rangle_{_{_{\mathcal{D}_{2}^{-}}}}$.
\end{prop}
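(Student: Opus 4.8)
The plan is to use the $*$-invariance of the inner product to reduce the whole statement to a one-dimensional computation. Since $\sqrt{-1}B_t$ is self-adjoint and the vectors $e_{a+2n}$ are its eigenvectors for the distinct eigenvalues $[a+2n]$, any invariant sesquilinear form makes distinct weight spaces orthogonal; hence in the basis $\{e_{a+2n}\}_{n\in\mathbb{Z}}$ the form $\langle\cdot,\cdot\rangle_\lambda$ is diagonal. Consequently all cross terms between $\mathcal{D}_2^+=\operatorname{span}\{e_{a+2n}\mid n>0\}$, $\mathcal{N}=\mathbb{C}e_a$ and $\mathcal{D}_2^-=\operatorname{span}\{e_{a+2n}\mid n<0\}$ vanish identically, and the entire proposition follows once I understand the single-variable functions $\lambda\mapsto\langle e_{a+2n},e_{a+2n}\rangle_\lambda$ and their limits as $\lambda\to q+q^{-1}$.

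First I would compute $\langle e_{a+2n},e_{a+2n}\rangle$ in $\mathcal{M}_{\lambda,a}$ \emph{before} rescaling, by iterating the ladder. Writing $e_{a+2n}=T^+_{a+2n-2}\cdots T^+_a e_a$ and using the adjoint manipulations of the displayed computation preceding the proposition together with the relations $T^-_{c+2}T^+_c=(\{c-a+1\}-A_c)(\{c+a+1\}+A_c)$ and $T^+_{c-2}T^-_c=(\{c-a-1\}-A_c)(\{c+a-1\}+A_c)$, and the fact that $A_{a+2m}$ acts as the scalar $\lambda$ on every weight space of $\mathcal{M}_{\lambda,a}$, each rung contributes a factor $(\{c-a\pm1\}-\lambda)(\{c+a\pm1\}+\lambda)$ times an explicit ratio of $\{\cdot\}$-symbols coming from re-indexing $T^\mp_c$. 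The key observation is that the only rung producing the factor $\{1\}-\lambda=(q+q^{-1})-\lambda$ is the first one: at $c=a$ the positive relation gives $(\{1\}-\lambda)(\{2a+1\}+\lambda)$ and the negative one gives $(\{1\}-\lambda)(\{2a-1\}+\lambda)$. Thus $\langle e_{a+2n},e_{a+2n}\rangle$ carries \emph{exactly one} factor $(q+q^{-1}-\lambda)$ whenever $n\neq0$, while $\langle e_a,e_a\rangle=1$ carries none.

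Multiplying by $\{a+2\}\{a\}^{-1}(q+q^{-1}-\lambda)^{-1}(\{2a+1\}+\lambda)^{-1}$ and letting $\lambda\to q+q^{-1}$ then makes the three cases transparent. On $\mathcal{N}$ the singular factor $(q+q^{-1}-\lambda)^{-1}$ is unmatched, so $\langle e_a,e_a\rangle_\lambda\to\infty$. For $n>0$ the first-rung factor $(\{1\}-\lambda)$ cancels $(q+q^{-1}-\lambda)^{-1}$; the surviving rungs ($c\geq a+2$) are continuous and nonvanishing at $\lambda=q+q^{-1}$, and they are precisely the ones computing the $\mathcal{D}_2^+$-norms once $A$ is set to $q+q^{-1}$, so the limit reproduces $\langle\cdot,\cdot\rangle_{\mathcal{D}_2^+}$; the normalisation is automatic because the rescaling was engineered so that $\langle e_{a+2},e_{a+2}\rangle_\lambda=1$ identically, matching $\langle e_{a+2},e_{a+2}\rangle_{\mathcal{D}_2^+}=1$. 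For $n<0$ the same cancellation occurs, but now the base case $\langle e_{a-2},e_{a-2}\rangle_\lambda$ tends to a nontrivial constant: using the identity $\{x\}+\{y\}=\{\tfrac{x+y}{2}\}\{\tfrac{x-y}{2}\}$, so that $\{2a-1\}+\{1\}=\{a\}\{a-1\}$ and $\{2a+1\}+\{1\}=\{a+1\}\{a\}$, together with the re-indexing factor $\{a\}\{a-2\}^{-1}$ for $T^+_a\to T^+_{a-2}$, one obtains exactly $\{a+2\}\{a-1\}\{a-2\}^{-1}\{a+1\}^{-1}$, which is the definition of $\widetilde{\langle\cdot,\cdot\rangle}_{\mathcal{D}_2^-}$.

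The main obstacle I expect is the bookkeeping on the $\mathcal{D}_2^-$ side. Unlike $\mathcal{D}_2^+$, for which the rescaled form is tailor-made, the negative ladder inherits an asymmetric collection of $\{\cdot\}$-factors from re-indexing the operators $T^-_c$, and one must verify that after the $(q+q^{-1}-\lambda)$ cancellation and the limit these collapse to the \emph{same} constant $\{a+2\}\{a-1\}\{a-2\}^{-1}\{a+1\}^{-1}$ for every weight simultaneously, not merely for the cyclic vector $e_{a-2}$. This amounts to checking that the $\lambda$-dependent ratio of the $\mathcal{M}_{\lambda,a}$-norms to the $\mathcal{D}_2^-$-norms is, in the limit, independent of the weight index, which follows by comparing the two recursions rung by rung; confirming positivity and finiteness of all intermediate quantities for $0<\lambda<q+q^{-1}$ is a routine side check.
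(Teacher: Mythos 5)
Your proposal is correct and takes essentially the same route as the paper's proof: weight-space orthogonality reduces everything to the diagonal entries $\langle e_{a+2n},e_{a+2n}\rangle_{\lambda}$, these are evaluated by iterating the ladder relations with $A$ acting as the scalar $\lambda$, the unique singular factor $(q+q^{-1}-\lambda)$ coming from the first rung cancels the rescaling, and the limits are matched against the $\mathcal{D}_2^{+}$ and $\mathcal{D}_2^{-}$ recursions, with your constant $\{a+2\}\{a-1\}\{a-2\}^{-1}\{a+1\}^{-1}$ agreeing with the paper's. The only cosmetic difference is that you compute the unrescaled form first and rescale at the end, whereas the paper carries the rescaled form $\langle\cdot,\cdot\rangle_{\lambda}$ through the induction.
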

\begin{proof}The computations in this proof are essentially the same as in \cite[Proposition 3.14.]{DCDz1}.\\
$\bullet$ Let $\xi$ and $\eta \in \mathcal{D}_{2}^{+}$. Since $\langle T_{a}^{+,(n)}e_a, T_{a}^{+,(m)}e_a \rangle_{_{_{\lambda}}}=0$ if $n\neq m$, we may assume that \begin{align*}
\xi=\eta=T_{a}^{+,(n)}e_a=T_{a+2n-2}^{+}T_{a+2n-4}^{+}\cdots T_{a}^{+}e_{a}. 
\end{align*}
Then we have the following computation:\\
\\
$\langle \xi, \eta \rangle_{_{_{\lambda}}}\\
=\langle T_{a+2n-2}^{+}T_{a+2n-4}^{+}\cdots T_{a}^{+}e_{a}, T_{a+2n-2}^{+}T_{a+2n-4}^{+}\cdots T_{a}^{+}e_{a}\rangle_{_{_{\lambda}}}\\
=\langle T_{a+2n-2}^{-}T_{a+2n-2}^{+}T_{a+2n-4}^{+}\cdots T_{a}^{+}e_{a}, T_{a+2n-4}^{+}T_{a+2n-6}^{+}\cdots T_{a}^{+}e_{a}\rangle_{_{_{\lambda}}}\\
=\dfrac{\{a+2n-2\}}{\{a+2n\}}\langle T_{a+2n}^{-}T_{a+2n-2}^{+}T_{a+2n-4}^{+}\cdots T_{a}^{+}e_{a}, T_{a+2n-4}^{+}T_{a+2n-6}^{+}\cdots T_{a}^{+}e_{a}\rangle_{_{_{\lambda}}}\\
=\dfrac{\{a+2n-2\}}{\{a+2n\}}\langle (\{2n-1\}-A_{a+2n-2})(\{2n-1+2a\}+A_{a+2n-2})T_{a+2n-4}^{+}\cdots T_{a}^{+}e_{a}, \\
T_{a+2n-4}^{+}T_{a+2n-6}^{+}\cdots T_{a}^{+}e_{a}\rangle_{_{_{\lambda}}}\\
=\dfrac{\{a+2n-2\}}{\{a+2n\}}(\{2n-1\}-\lambda)(\{2n-1+2a\}+\lambda)\langle T_{a+2n-4}^{+}\cdots T_{a}^{+}e_{a}, \\
T_{a+2n-4}^{+}T_{a+2n-6}^{+}\cdots T_{a}^{+}e_{a}\rangle_{_{_{\lambda}}}\\
\vdots\\
=\dfrac{\{a+2\}}{\{a+2n\}}(\displaystyle \prod_{k=1}^{n-1}(\{2k+1\}-\lambda))(\displaystyle \prod_{k=1}^{n-1}(\{2k+1+2a\}-\lambda))\langle T_{a}^{+}e_{a}, T_{a}^{+}e_{a} \rangle_{_{_{\lambda}}}$\\
$\to\dfrac{\{a+2\}}{\{a+2n\}}(\displaystyle \prod_{k=1}^{n-1}(\{2k+1\}-q-q^{-1}))(\displaystyle \prod_{k=1}^{n-1}(\{2k+1+2a\}-q-q^{-1}))$ as $\lambda \to q+q^{-1}$.

On the other hand, the similar computation shows that 
\begin{align*}
&\langle \xi, \eta \rangle_{_{_{\mathcal{D}_{2}^{+}}}}=\dfrac{\{a+2\}}{\{a+2n\}}(\displaystyle \prod_{k=1}^{n-1}(\{2k+1\}-q-q^{-1}))(\displaystyle \prod_{k=1}^{n-1}(\{2k+1+2a\}-q-q^{-1}))
\end{align*}
$\bullet$ Let $\xi$ and $\eta$ $\in \mathcal{N}$. We may assume that $\xi=\eta=e_{a}$. Then $\langle \xi, \eta \rangle_{_{_{\lambda}}}=\{a+2\}\{a\}^{-1}(q+q^{-1}-\lambda)^{-1}(\{2a+1\}+\lambda)^{-1}
\to \infty$ as $\lambda \to q+q^{-1}$.\\
$\bullet$ Let $\xi$ and $\eta \in \mathcal{D}_{2}^{-}$. As in the former cases, we may assume that
\begin{align*}
\xi=\eta=T_{a}^{-,(n)}e_a=
T_{a-2n+2}^{-}T_{a-2n+4}^{-}\cdots T_{a}^{-}e_{a}.
\end{align*}
Then by a similar computation as above we have
\begin{align*}
\displaystyle\lim_{\lambda }  \langle \xi, \eta \rangle_{_{_{\lambda}}}=\dfrac{\{a+2\}}{\{a-2\}}\dfrac{\{a-1\}}{\{a+1\}}\langle \xi, \eta \rangle_{_{_{\mathcal{D}_{2}^{-}}}}.
\end{align*}
\end{proof}
Restricting the map in Proposition \ref{inn}, we get an invariant inner product on the space $\mathcal{D}_{2}^{+} \oplus \mathcal{D}_{2}^{-}$. In the sequel, we equip $\mathcal{D}_{2}^{+} \oplus \mathcal{D}_{2}^{-}$ with this inner product. 
\subsection{A construction of $1$-cocycles on $SL_q(2,\mathbb{R})$ from a representation }
 In what follows, let $\pi \colon \mathcal{O}_q(S^{2}_t) \bowtie \mathcal{I}_c \to \End(\mathcal{D}_{2}^{+} \oplus \mathcal{D}_{2}^{-})$ denote the $*$-representation of $\mathcal{O}_q(S^{2}_t) \bowtie \mathcal{I}_c$ on $\mathcal{D}_{2}^{+} \oplus \mathcal{D}_{2}^{-}$ and $\Tilde{\pi}$ denote the linear map $\Tilde{\pi}\colon\mathcal{B}\to\End(\mathcal{M}_{q+q^{-1},a})$. By Proposition \ref{char}, Theorem \ref{mainext} and the fact that $\Tilde{\pi}$ is a representation, the following theorem holds.
\begin{thm}\label{construction1}
For $b\in\mathcal{O}_q(S^{2}_t)$ and $x\in\mathcal{I}_c$, we put $\Tilde{C}(bx)=\Tilde{\pi}(bx)e_{a}-\epsilon(bx)e_{a}$ and $C(b)=\Tilde{\pi}(b)e_a-\epsilon(b)e_a$. Then $\Tilde{C}$ is a $(\pi, \epsilon)$-$1$-cocycle and $C$ is a Yetter--Drinfeld $(\pi, \epsilon)$-$1$-cocycle.
\end{thm}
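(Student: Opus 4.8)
The plan is to identify the module $(\mathcal{M}_{q+q^{-1},a},\Tilde{\pi})$ with the block-triangular representation of Proposition \ref{char} and then read off every assertion almost formally. First I would record the consequence of Lemma \ref{exactseq}: the vector-space splitting $\mathcal{M}_{q+q^{-1},a}=(\mathcal{D}_{2}^{+}\oplus\mathcal{D}_{2}^{-})\oplus\mathbb{C}e_a$, in which $\mathcal{D}_{2}^{+}\oplus\mathcal{D}_{2}^{-}=\ker p$ is a genuine submodule for all of $\mathcal{O}_q(S^{2}_t)\bowtie\mathcal{I}_c$ and $\mathbb{C}e_a$ is a complement mapped isomorphically onto $\mathds{1}$ by $p$. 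Since $p$ is an intertwiner and $\mathds{1}$ carries the trivial action, $p(\Tilde{\pi}(y)e_a)=\epsilon(y)p(e_a)$ for every $y$, so both $C(b)=\Tilde{\pi}(b)e_a-\epsilon(b)e_a$ and, more generally, $\Tilde{C}(y)=\Tilde{\pi}(y)e_a-\epsilon(y)e_a$ land in $\ker p=\mathcal{D}_{2}^{+}\oplus\mathcal{D}_{2}^{-}$. Thus, relative to this splitting, $\Tilde{\pi}$ has matrix $\begin{bmatrix}\pi & C\\ 0 & \epsilon\end{bmatrix}$ on $\mathcal{O}_q(S^{2}_t)$ (and $\begin{bmatrix}\pi & \Tilde{C}\\ 0 & \epsilon\end{bmatrix}$ on $\mathcal{O}_q(S^{2}_t)\bowtie\mathcal{I}_c$), with diagonal blocks $\pi$ and $\epsilon$ and off-diagonal block exactly $C$ (resp. $\Tilde{C}$). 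Because $\mathcal{M}_{q+q^{-1},a}$ is an honest module, $\Tilde{\pi}$ is a bona fide $\ast$-representation, so Proposition \ref{char}(1) immediately yields that $C$ is a $(\pi,\epsilon)$-$1$-cocycle on $\mathcal{O}_q(S^{2}_t)$ and that $\Tilde{C}$ is a $(\pi,\epsilon)$-$1$-cocycle on $\mathcal{O}_q(S^{2}_t)\bowtie\mathcal{I}_c$.

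To invoke Theorem \ref{mainext} for the Yetter--Drinfeld property, I would next verify that $\Tilde{C}$ has the special shape $\Tilde{C}(bx)=C(b)\epsilon(x)$ required by condition (2) there. Expanding $\Tilde{C}(bx)=\Tilde{\pi}(b)\Tilde{\pi}(x)e_a-\epsilon(b)\epsilon(x)e_a$, the whole matter reduces to the single identity $\Tilde{\pi}(x)e_a=\epsilon(x)e_a$ for $x\in\mathcal{I}_c$. This is the one genuinely computational point, and I expect it to be the main obstacle: one must recognize that $e_a$ spans the one-dimensional weight-$[a]$ space $\Phi_0\mathcal{M}_{q+q^{-1},a}$, that $\Phi_0=\Phi_{\mathcal{C}}$ is the support projection of the counit on $\mathcal{I}_c$, and that $\mathcal{I}_c=\bigoplus_{n}\mathbb{B}(\mathbb{C}_n)$ acts diagonally along weights with $\epsilon$ being evaluation on the $n=0$ summand. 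Granting $\Phi_n e_a=\delta_{n,0}e_a$, any $x\in\mathcal{I}_c$ scales $e_a$ by its $n=0$ coefficient, which is precisely $\epsilon(x)$; hence $\Tilde{\pi}(x)e_a=\epsilon(x)e_a$ and $\Tilde{C}(bx)=\epsilon(x)\bigl(\Tilde{\pi}(b)e_a-\epsilon(b)e_a\bigr)=C(b)\epsilon(x)$.

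Finally, $\Tilde{C}$ is then a $1$-cocycle on $\mathcal{O}_q(S^{2}_t)\bowtie\mathcal{I}_c$ that restricts to $C$ on $\mathcal{O}_q(S^{2}_t)$ and satisfies $\Tilde{C}(bx)=C(b)\epsilon(x)$, so condition (2) of Theorem \ref{mainext} is met; the implication $(2)\Rightarrow(1)$ of that theorem shows that $C$ obeys $\pi(\omega)C(x)=C(x\lhd S^{-1}(\omega))$, i.e. $C$ is a Yetter--Drinfeld $(\pi,\epsilon)$-$1$-cocycle, completing the argument. All the cocycle identities are thus formal once the block-triangular picture from Lemma \ref{exactseq} is in place, and the only substantive step is the identification of the $\mathcal{I}_c$-action on the vacuum vector $e_a$ with the counit.
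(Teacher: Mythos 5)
Your proposal is correct and takes essentially the same route as the paper, which derives the theorem in one line from Proposition \ref{char}, Theorem \ref{mainext}, and the fact that $\Tilde{\pi}$ on $\mathcal{M}_{q+q^{-1},a}$ is a representation whose block-triangular form comes from the exact sequence of Lemma \ref{exactseq}. Your explicit verification that $\Tilde{\pi}(x)e_a=\epsilon(x)e_a$ for $x\in\mathcal{I}_c$ (via $\Phi_n e_a=\delta_{n,0}e_a$ and $\epsilon=\Phi_0$-evaluation), which gives $\Tilde{C}(bx)=C(b)\epsilon(x)$, merely fills in the detail the paper leaves implicit when invoking condition (2) of Theorem \ref{mainext}.
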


\begin{Rem}
Our $1$-cocycles are not coboundaries. This fact follows from Proposition \ref{iff} and the remark after Lemma \ref{exactseq}. This also follows from Theorem \ref{proper}.
\end{Rem}
\section{Some properties of our $1$-cocycles}
In this section, we consider some properties our $1$-cocycles have. We see the following properties:
\begin{enumerate}
    \item How our $1$-cocycles $C$ and $\Tilde{C}$ decompose into `irreducibles'.
    \item Our $1$-cocycle $C$ exhausts all (non-trivial) Yetter--Drinfeld $1$-cocycles.
    \item Our $1$-cocycle $C$ is proper.
\end{enumerate}

\subsection{A decomposition}\label{decomp}
By decomposing $\mathcal{D}_{2}^{+} \oplus \mathcal{D}_{2}^{-}$, our $1$-cocycle $\Tilde{C}$ is written as the sum of two $1$-cocyles. Let $H$ be the completion of  $\mathcal{D}_{2}^{+} \oplus \mathcal{D}_{2}^{-}$. Let ${D}_{2}^{+}$ and ${D}_{2}^{-}$ be the completions of the spaces $\mathcal{D}_{2}^{+}$ and $\mathcal{D}_{2}^{-}$ respectively. Suppose $P_{+}$ and $P_{-}$ be the orthogonal projections onto ${D}_{2}^{+}$ and ${D}_{2}^{-}$. Let $\pi_{\pm}$ denotes the representations on $\mathcal{D}_{2}^{\pm}$ respectively. Then we have
\begin{prop}
$\Tilde{C}_{+}\coloneqq P_{+}\Tilde{C}$ and $\Tilde{C}_{-}\coloneqq P_{-}\Tilde{C}$ are $1$-cocycles with respect to $(\pi^{+}_{2}, \epsilon)$ and $(\pi^{-}_{2}, \epsilon)$ respectively.
\end{prop}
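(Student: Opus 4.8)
The plan is to verify the $1$-cocycle identity directly for $\Tilde{C}_{\pm}$ using the fact that $\Tilde{C}$ is already a $(\pi,\epsilon)$-$1$-cocycle (Theorem \ref{construction1}) together with the observation that the projections $P_{\pm}$ intertwine $\pi$ with $\pi_{\pm}$. The key structural input is that $\mathcal{D}_2^+ \oplus \mathcal{D}_2^-$ decomposes as an orthogonal direct sum of two \emph{subrepresentations} of $\pi$: that is, both $D_2^+$ and $D_2^-$ are $\pi$-invariant, so that $P_{\pm}$ commutes with $\pi(z)$ for every $z \in \mathcal{O}_q(S^2_t)\bowtie\mathcal{I}_c$, and $P_{\pm}\pi(z)P_{\pm} = \pi_{\pm}(z)P_{\pm}$. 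This is exactly the reducibility of $\mathcal{D}_2^+ \oplus \mathcal{D}_2^-$ into $\mathcal{D}_2^+$ and $\mathcal{D}_2^-$ established in the representation-theoretic setup of Section \ref{rep}.

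First I would record that $\pi(z)P_{\pm} = P_{\pm}\pi(z)$ for all $z$, which follows since $\mathcal{D}_2^{\pm}$ are $\pi$-invariant and $\pi$ is a $\ast$-representation, so the projections onto the summands are intertwiners. Then, for $x,y \in \mathcal{O}_q(S^2_t)$, I would compute
\begin{align*}
\Tilde{C}_{+}(xy) &= P_{+}\Tilde{C}(xy) = P_{+}\bigl(\pi(x)\Tilde{C}(y) + \Tilde{C}(x)\epsilon(y)\bigr)\\
&= P_{+}\pi(x)\Tilde{C}(y) + P_{+}\Tilde{C}(x)\epsilon(y)\\
&= \pi(x)P_{+}\Tilde{C}(y) + \Tilde{C}_{+}(x)\epsilon(y)\\
&= \pi_{+}(x)\Tilde{C}_{+}(y) + \Tilde{C}_{+}(x)\epsilon(y),
\end{align*}
where the first equality is the definition of $\Tilde{C}_+$, the second uses that $\Tilde{C}$ is a $1$-cocycle, the third uses that $P_+$ commutes with $\pi(x)$, and the last uses that $P_+\Tilde{C}(y) = \Tilde{C}_+(y) \in D_2^+$ together with $\pi(x)|_{D_2^+} = \pi_+(x)$. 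The identical computation with $P_-$ and $\pi_-$ gives the claim for $\Tilde{C}_-$.

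The only genuine point requiring care — and the step I expect to be the main obstacle — is justifying that $D_2^+$ and $D_2^-$ are honest $\pi$-invariant subspaces whose associated projections are bounded and commute with the whole representation. Since $\mathcal{D}_2^+ \oplus \mathcal{D}_2^-$ is defined with the invariant inner product obtained in Proposition \ref{inn}, one must confirm that this inner product makes the decomposition \emph{orthogonal} (so that $P_{\pm}$ are genuine orthogonal projections rather than merely algebraic idempotents) and that the summands are preserved by the action, using the weight grading: $D_2^+$ is spanned by weight vectors $e_{a+2n}$ with $n>0$ and $D_2^-$ by those with $n<0$, and the generators of $\mathcal{O}_q(S^2_t)\bowtie\mathcal{I}_c$ act within each summand as shown in Lemma \ref{exactseq}. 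Once invariance and orthogonality are in hand, the cocycle identity for each piece is the short algebraic manipulation displayed above, and the proof closes immediately.
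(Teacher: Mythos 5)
Your proof is correct and is essentially the paper's own argument: both proofs apply $P_{\pm}$ to the cocycle identity for $\Tilde{C}$ and use that $\mathcal{D}_2^{+}$ and $\mathcal{D}_2^{-}$ are invariant summands, the paper phrasing this via the block-diagonal form $\pi=(\pi_{+}\oplus\pi_{-})$ while you phrase it as $P_{\pm}$ commuting with $\pi$. One small slip: the identity should be checked for $x,y$ ranging over all of $\mathcal{O}_q(S_t^2)\bowtie\mathcal{I}_c$ (as the paper does), not just $\mathcal{O}_q(S_t^2)$, though your computation applies verbatim in that generality.
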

\begin{proof}
We can show this proposition by a direct computation. let $b,c\in\mathcal{O}_{q}(S_t^2)\bowtie\mathcal{I}_c$, then we have
\begin{align*}
&P_{+}\Tilde{C}(bc)\\
&=P_{+}(\pi(b)\Tilde{C}(c)+\Tilde{C}(b)\epsilon(c))\\
&=P_{+}(\pi_{+}\oplus\pi_{-})(b)(P_{+}\Tilde{C}(c)+P_{-}\Tilde{C}(c))+P_{+}\Tilde{C}(b)\epsilon(c)\\
&=\pi_{+}(b)P_{+}\Tilde{C}(c)+P_{+}\Tilde{C}(b)\epsilon(c).
\end{align*}
In a similar way, we can prove $P_{-}\Tilde{C}(bc)=\pi_{-}(b)P_{-}\Tilde{C}(c)+P_{-}\Tilde{C}(b)\epsilon(c).$
\end{proof}
We define $1$-cocycles $C_{\pm}\coloneqq P_{\pm}C$. One can show that $\Tilde{C}_{\pm}(bx)=C_{\pm}(b)\epsilon(x)\hspace{1mm}(b\in\mathcal{O}_q(S_t^2), x\in\mathcal{I}_c)$ and then $C_{\pm}$ satisfy the Yetter--Drinfeld condition.
\subsection{A classification} 
We have constructed examples of Yetter--Drinfeld $1$-cocycles on $\mathcal{O}_q(S_t^2)$. Then one may want to find all of Yetter--Drinfeld $1$-cocycles on $\mathcal{O}_q(S_t^2)$. Since it is easy to find all of coboundaries, let us find all of Yetter--Drinfeld $(\pi', \epsilon)$-$1$-cocycles, which is not a coboundary, where $\pi'=(\pi', \mathcal{K})$ is an %irreducible
$SL(2,\mathbb{R})_t$-admissible $*$-representation.\par
Suppose $C'\colon\mathcal{O}_q(S_t^2)\to\mathcal{K}$ is a surjective Yetter--Drinfeld $(\pi', \epsilon)$-$1$-cocycle which is not a coboundary. Then we have 
\begin{align*}
    \mathcal{K}=\{C'(x)\mid x\in\mathcal{O}_q(S_t^2)\}.
\end{align*}
Let $\mathcal{L}=\mathcal{K}\oplus \mathds{1}$ (the direct sum of vector spaces). Extend $C'$ to the $1$-cocycle $\Tilde{C'}\colon\mathcal{O}_q(S_t^2)\bowtie\mathcal{I}\to\mathcal{K}$. Since $\Tilde{C'}$ is a $1$-cocycle, the linear map $\Tilde{\pi}'\coloneqq\begin{bmatrix}
\pi' & \Tilde{C'} \\
0 & \epsilon \\
\end{bmatrix}$
is a representation of $\mathcal{O}_q(S^{2}_t)\bowtie\mathcal{I}$ on $\mathcal{L}$. %Since $C'$ is not a coboundary, we have $\Tilde{\pi}'\ncong\pi'\oplus \epsilon$ as representations of $\mathcal{O}_q(S^{2}_t)\bowtie\mathcal{I}_c$. 
Since $C'$ is surjective, the vector $e_a\in\mathds{1}$ is a cyclic vector for the representation $(\mathcal{L}, \Tilde{\pi}')$. We have 
\begin{align*}
\Tilde{\pi}'(A_a)e_a&=\begin{bmatrix}
\pi'(A_a) & C'(A_a) \\
0 & \epsilon(A_a) \\
\end{bmatrix}   
\begin{bmatrix}
0 \\
1 \\
\end{bmatrix}\\
&=\begin{bmatrix}
\pi'(A_a) & 0 \\
0 & q+q^{-1} \\
\end{bmatrix}
\begin{bmatrix}
0 \\
1 \\
\end{bmatrix}\\
&=
\begin{bmatrix}
0 \\
q+q^{-1} \\
\end{bmatrix}\\
&=(q+q^{-1})e_a.
\end{align*}
Thus the representation $\mathcal{L}$ is  $(q+q^{-1},a)$-basic.
   Then by the universality of basic modules (Proposition \ref{univ}) there exists a surjective intertwiner $s\colon\mathcal{M}_{q+q^{-1},a}\twoheadrightarrow\mathcal{L}$ such that the right half side of the following diagram commutes:
\[
  \xymatrix{
    0\ar[r]& \mathcal{D}_{2}^{+}\oplus\mathcal{D}_{2}^{+} \ar[r]^{\phi} \ar[d]_{t} &  \mathcal{M}_{q+q^{-1},a} \ar[r]^{p} \ar[d]^{s} &  \mathds{1} \ar[r] \ar@{=}[d] &0 \quad\text{(exact)}\\
   0\ar[r] & \mathcal{K} \ar[r] & \mathcal{L} \ar[r] & \mathds{1}\ar[r] &0\quad\text{(exact)}.
  }
\]
By the universality of kernels, there exists an intertwiner $t\colon\mathcal{D}_{2}^{+}\oplus\mathcal{D}_{2}^{-}\rightarrow\mathcal{K}$ such that the diagram above commutes. Since $s$ is surjective, $t$ is also surjective. This implies that $t=\alpha P_{+}+\beta P_{-}$ for some constants $\alpha$ and $\beta$. 
%is a constant multiple of the projection $P_{+}$ such that the  diagram above commutes.
%Thus $C'$ is a constant multiple of $P_{+}\Tilde{C}$. For the latter extension, one can show that $C'$ is a constant multiple of $P_{+}\Tilde{C}$ in a similar way. By a similar argument, we can show that any $(\pi',\epsilon)$-$1$-cocycle $C'$, where $\pi'$ is not necessarily irreducible, 
Therefore $C'$ is of the form $\alpha C_{+}+\beta C_{-}$ and $\restr{\Tilde{C'}}{\mathcal{O}_q(S_t^2)\bowtie\mathcal{I}_c}$ is of the form $\alpha \Tilde{C}_{+}+\beta \Tilde{C}_{-}$.
   %Then $C'\coloneqq C_++C_-$ is a $(\pi_{\mathcal{D}_{2}^{+} \oplus \mathcal{D}_{2}^{-}}, \epsilon)$-$1$-cocycle corresponding to the short exact sequence
   %\[0\xrightarrow & \mathcal{D}_{2}^{+} \oplus \mathcal{D}_{2}^{-}  \xrightarrow & \mathcal{L}^+\oplus \mathcal{L}^- & \xrightarrow &
  %\mathds{1} \xrightarrow & 0,\]
  %and $C_{\pm}=P_{\pm}C'$, where $P_{\pm}$ are orthogonal projections appeared in subsection \ref{decomp}. By the universality of basic modules(\cite[Proposition 3.11.]{DCDz1}), there exists a surjective intertwiner $M_{q+q^{-1}, a}	\twoheadrightarrow & \mathcal{L}^+\oplus \mathcal{L}^-$ . Comparing the weights, one can see that this surjective arrow must be injective. Therefore $C'=\Tilde{C}$ and $C_{\pm}=P_{\pm}\Tilde{C}$.
\subsection{The growth of $C$}
In this subsection, we consider the growth of our $1$-cocycle $C$ by considering the orthogonal polynomials $(P_n)_n$, which appear in Subsection \ref{AW}.
Let $u^{n}_{[a]}\coloneqq\displaystyle\sum_{i=-n}^{n}\eta_{[a+2i]}^{n}\otimes u^{n}_{\eta_{[a+2i]}^{n}, \eta_{[a]}^{n}}$. We define $C^n\coloneqq(\id\otimes C)u^{[n]}$ (cf.\ \cite[Subsection 6.2.]{DFSW}). We call $C^{n*}C^{n}$ the \emph{growth} of $C$. Put $\omega_{\lambda}(\bullet)=\langle e_a, \pi_{\lambda}(\bullet)e_a\rangle$, where $\pi_{\lambda}=(\mathcal{L}_{\lambda, a}, \pi_{\lambda})$. We have
\begin{align*}
&C^{n*}C^{n}\\
&=\displaystyle\lim_{\lambda \to \{1\}}\dfrac{\{a+2\}}{\{a\}(q+q^{-1}-\lambda)(\{2a+1\}+\lambda)}\displaystyle\sum_{i=-n}^{n}\langle \pi_{\lambda}(u^{n}_{\eta_{[a+2i]}^{n},\eta_{[a]}^{n}})e_{a}-\epsilon(u^{n}_{\eta_{[a+2i]}^{n}, \eta_{[a]}^{n}})e_{a}, \\
&\pi_{\lambda}(u^{n}_{\eta_{[a+2i]}^{n}, \eta_{[a]}^{n}})e_{a}-\epsilon(u^{n}_{\eta_{[a+2i]}^{n}, \eta_{[a]}^{n}})e_{a} \rangle\\
&=\dfrac{\{a+2\}}{\{a\}(\{2a+1\}+q+q^{-1})}\displaystyle\lim_{\lambda \to \{1\}}\sum_{i=-n}^{n}(q+q^{-1}-\lambda)^{-1}(\omega_{\lambda}((u^{n}_{\eta_{[a+2i]}^{n}, \eta_{[a]}^{n}})^{*}(u^{n}_{\eta_{[a+2i]}^{n}, \eta_{[a]}^{n}}))\\
&-2\operatorname{Re}\epsilon((u^{n}_{\eta_{[a+2i]}^{n}, \eta_{[a]}^{n}})^{*})\omega_{\lambda}((u^{n}_{\eta_{[a+2i]}^{n}, \eta_{[a]}^{n}}))+
\epsilon((u^{n}_{\eta_{[a+2i]}^{n}, \eta_{[a]}^{n}})^{*}(u^{n}_{\eta_{[a+2i]}^{n}, \eta_{[a]}^{n}})))
\end{align*}
\begin{align*}
&=\dfrac{\{a+2\}}{\{a\}(\{2a+1\}+q+q^{-1})}\displaystyle\lim_{\lambda \to \{1\}}(q+q^{-1}-\lambda)^{-1}(2-2\operatorname{Re}\epsilon((u^{n}_{\eta_{[a]}^{n}, \eta_{[a]}^{n}})^{*})\omega_{\lambda}((u^{n}_{\eta_{[a]}^{n}, \eta_{[a]}^{n}}))\\
%\end{align*}
%\begin{align*}
&=\dfrac{2\{a+2\}}{\{a\}^{2}\{a+1\}}\displaystyle\lim_{\lambda \to \{1\}}\dfrac{1-\operatorname{Re}(P_{n}(\tfrac{\lambda+(q+q^{-1})^{-1}\llbracket a\rrbracket^{2}}{q+q^{-1}+(q+q^{-1})^{-1}\llbracket a\rrbracket^2}))}{q+q^{-1}-\lambda}\\
&=\dfrac{2\{a+2\}}{\{a\}^{2}\{a+1\}}\operatorname{Re}(P'_{n}(\tfrac{q+q^{-1}+(q+q^{-1})^{-1}\llbracket a\rrbracket^{2}}{q+q^{-1}+(q+q^{-1})^{-1}\llbracket a\rrbracket^2})).
\end{align*}
Hence we have proved the following theorem:
\begin{thm}\label{growth}
$C^{n*}C^{n}\asymp P'_{n}(1)\asymp Q'_{n}((q+q^{-1})/2)$.
In other words, the growth of our $1$-cocycle $C$ is equal to that of $P'_{n}(1)$ and $Q'_n((q+q^{-1})/2)$.
\end{thm}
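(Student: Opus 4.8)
The bulk of the work has already been carried out in the displayed computation preceding the statement, which produces the exact scalar identity
\begin{align*}
C^{n*}C^{n}=\frac{2\{a+2\}}{\{a\}^{2}\{a+1\}}\operatorname{Re}\bigl(P'_{n}(1)\bigr).
\end{align*}
My plan is to read off the two comparability relations from this identity together with the affine reparametrization relating $P_{n}$ and $Q_{n}$; no further representation-theoretic input is needed. First I would record that the prefactor $\tfrac{2\{a+2\}}{\{a\}^{2}\{a+1\}}$ is a strictly positive real number independent of $n$: since $q\in(0,1)$ we have $\{c\}=q^{c}+q^{-c}>0$ for every $c\in\mathbb{R}$, so each of $\{a\}$, $\{a+1\}$, $\{a+2\}$ is nonzero and the quotient is positive. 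Consequently $C^{n*}C^{n}$ is a fixed positive multiple of $\operatorname{Re}(P'_{n}(1))$, which already yields $C^{n*}C^{n}\asymp\operatorname{Re}(P'_{n}(1))$.

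Next I would remove the real part. The Askey--Wilson polynomial $p_{n}$ of Subsection~\ref{AW} is built from the parameters $(-q^{-2a+1},-q^{2a+1},q,q)$ and base $q^{2}$, all of which are real for $q\in(0,1)$ and $a\in\mathbb{R}$; hence $p_{n}$ has real coefficients as a polynomial in $\cos\theta$, and the same holds for its real scalar rescaling $Q_{n}$ and for the real affine reparametrization $P_{n}$. Therefore $P'_{n}(1)\in\mathbb{R}$, so $\operatorname{Re}(P'_{n}(1))=P'_{n}(1)$ and we obtain $C^{n*}C^{n}\asymp P'_{n}(1)$.

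For the second comparability I would differentiate the defining relation of Subsection~\ref{AW}. Writing
\begin{align*}
P_{n}(X)&=Q_{n}\!\left(\frac{\kappa X-\mu}{2}\right),\\
\kappa&\coloneqq q+q^{-1}+(q+q^{-1})^{-1}\llbracket a\rrbracket^{2},\\
\mu&\coloneqq (q+q^{-1})^{-1}\llbracket a\rrbracket^{2},
\end{align*}
the chain rule gives $P'_{n}(X)=\tfrac{\kappa}{2}Q'_{n}\!\bigl(\tfrac{\kappa X-\mu}{2}\bigr)$, and since $\kappa-\mu=q+q^{-1}$, evaluation at $X=1$ yields $P'_{n}(1)=\tfrac{\kappa}{2}Q'_{n}\!\bigl(\tfrac{q+q^{-1}}{2}\bigr)$. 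As $\kappa>0$ is independent of $n$, this is a fixed positive multiple, whence $P'_{n}(1)\asymp Q'_{n}((q+q^{-1})/2)$. Chaining the two relations proves the theorem.

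I expect the only genuinely delicate point to lie upstream rather than in this bookkeeping: the passage, in the preceding display, from the finite sum of difference quotients $(q+q^{-1}-\lambda)^{-1}\bigl(1-\operatorname{Re}P_{n}(\tfrac{\lambda+\mu}{\kappa})\bigr)$ to a derivative of $\operatorname{Re}P_{n}$ at the endpoint $X=1$ as $\lambda\to q+q^{-1}$. This step relies on $P_{n}(1)=1$, so that the numerator vanishes at the endpoint and the apparent singularity is removable, and on interchanging the limit with the finite sum; both are harmless here, the first because $u^{n}_{[a],[a]}$ evaluates to $1$ at the trivial-representation point $\lambda=q+q^{-1}$ and the second because the sum collapses to its $i=0$ term. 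Granting the displayed computation, the $\asymp$ statement itself is then immediate.
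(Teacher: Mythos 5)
Your proposal is correct and takes essentially the same approach as the paper: the paper's proof of this theorem is exactly the displayed computation preceding it, ending in the identity $C^{n*}C^{n}=\tfrac{2\{a+2\}}{\{a\}^{2}\{a+1\}}\operatorname{Re}(P'_{n}(1))$, from which the two $\asymp$ relations are read off just as you do. The bookkeeping you make explicit (positivity of the $n$-independent prefactor, realness of the coefficients of $P_{n}$ so that the real part can be dropped, and the chain-rule identity $P'_{n}(1)=\tfrac{\kappa}{2}Q'_{n}((q+q^{-1})/2)$ using your relation $\kappa-\mu=q+q^{-1}$) is precisely what the paper leaves implicit in ``Hence we have proved the following theorem.''
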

One can prove that $C$ is proper. To prove this, we need the following lemma.
\begin{lem}\label{Arano-sensei}
Let $a,b$ be real numbers with $a<b$. Let $\mu$ be a probability measure on the interval $[a,b]$. Assume that $\mu([b-\epsilon,b])>0$ and let $(P_n)_n$ be a sequence of orthogonal polynomials with respect to $\mu$. Suppose that the $P_n$'s are normalized so that $P_n(b)=1$. Then it follows that
\begin{align*}
    \displaystyle\lim_{n\to\infty}P'_n(b)=\infty.
\end{align*}
\end{lem}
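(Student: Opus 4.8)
The plan is to reduce the statement to a single classical fact about the largest zeros of orthogonal polynomials. First I would record the location of the zeros: since $P_n$ is the $n$-th orthogonal polynomial for $\mu$, it has $n$ simple real zeros $x_{n,1}>x_{n,2}>\cdots>x_{n,n}$, all lying strictly inside the convex hull of $\mathrm{supp}(\mu)$, hence in $(a,b)$; in particular $x_{n,j}<b$ for every $j$. Writing $P_n(x)=c_n\prod_{j=1}^n(x-x_{n,j})$ and using the normalization $P_n(b)=1$, logarithmic differentiation at $x=b$ gives
\[
P_n'(b)=P_n(b)\sum_{j=1}^n\frac{1}{b-x_{n,j}}=\sum_{j=1}^n\frac{1}{b-x_{n,j}}.
\]
Every summand is strictly positive, so in order to force $P_n'(b)\to\infty$ it suffices to control one term.

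The key input is that the largest zero converges to the right endpoint of the support, namely $\lim_{n\to\infty}x_{n,1}=b$. Granting this, $\tfrac{1}{b-x_{n,1}}\to\infty$, and since all remaining terms above are positive we obtain
\[
P_n'(b)\ \geq\ \frac{1}{b-x_{n,1}}\ \longrightarrow\ \infty,
\]
which is the assertion. Thus the entire content of the lemma is concentrated in the convergence $x_{n,1}\to b$.

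The main obstacle, and the only place where the hypothesis $\mu([b-\epsilon,b])>0$ is used, is precisely this convergence. I would establish it through the variational (Rayleigh-quotient) description of the largest zero coming from the truncated Jacobi matrix, whose eigenvalues are the zeros of $P_n$:
\[
x_{n,1}=\max\left\{\frac{\int_a^b x\,P(x)^2\,d\mu(x)}{\int_a^b P(x)^2\,d\mu(x)}\ :\ P\neq 0,\ \deg P\le n-1\right\}.
\]
This quantity is nondecreasing in $n$, because the admissible space of test polynomials grows, and it is bounded above by $b$; hence it has a limit $L\le b$. To see $L=b$, fix $\epsilon>0$ and test with $P(x)=(x-a+1)^m$, so that $P^2$ is a weight supported to the right: since $\mu([b-\epsilon,b])>0$ for every $\epsilon$, the normalized measures $P^2\,d\mu/\int P^2\,d\mu$ concentrate near $b$ as $m\to\infty$, which forces the corresponding Rayleigh quotient above $b-\epsilon$ for $m$ large; comparing with $x_{n,1}$ for $n>m$ gives $L\ge b-\epsilon$, and letting $\epsilon\to0$ yields $L=b$. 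With this the argument is complete. (Alternatively, one may simply invoke the classical theorem that the largest zero of the orthogonal polynomials tends to the supremum of the support whenever that point belongs to the support.)
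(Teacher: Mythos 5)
Your proof is correct, and its overall skeleton matches the paper's: both arguments reduce the lemma to (i) the convergence of the largest zero of $P_n$ to $b$, and (ii) the lower bound $P_n'(b)\ge 1/(b-x_n^n)$, where $x_n^n$ denotes that largest zero. The two steps are executed by different mechanisms, though. For (ii), the paper applies Rolle's theorem twice (locating zeros of $P_n'$ and then of $P_n''$), concludes that $P_n$ is convex to the right of its largest zero, and uses the secant-slope inequality $P_n'(b)>\bigl(P_n(b)-P_n(x_n^n)\bigr)/(b-x_n^n)=1/(b-x_n^n)$; your logarithmic-differentiation identity $P_n'(b)=\sum_{j}1/(b-x_{n,j})$ reaches the same bound in one line, is an exact identity rather than an estimate, and needs nothing beyond the zeros lying in $(a,b)$ and the normalization $P_n(b)=1$ --- it is the cleaner way to run this step. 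For (i), the paper simply cites Szeg\H{o} (Theorems 3.3.1 and 6.1.1: zeros are simple, lie in $(a,b)$, and cluster at every point of the support, hence at $b$), whereas you prove the convergence from scratch via the Rayleigh-quotient characterization of the largest zero as the top eigenvalue of the truncated Jacobi matrix, testing with $(x-a+1)^m$ to exploit the hypothesis $\mu([b-\epsilon,b])>0$ for every $\epsilon>0$. This makes your write-up self-contained modulo the (standard) variational characterization, at the cost of a concentration estimate you only sketch; it becomes rigorous by splitting the integral at $b-\epsilon$ and $b-\epsilon/2$ and comparing the resulting geometric factors. Your closing remark, that one may instead invoke the classical theorem on the largest zero tending to the supremum of the support, is exactly the paper's route, so in that variant the two proofs differ only in the convexity-versus-product-formula treatment of step (ii).
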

\begin{proof}
Notice that 
\begin{enumerate}
    \item For each $n$, the polynomial $P_n$ has distinct simple zeros in $(a,b)$ (\cite[Theorem 3.3.1.]{Szg})
    \item For all $\epsilon>0$, there exists some sufficiently large number $N>0$ such that $P_n$ has a zero in the interval $[b-\epsilon,b]$ for all $n\geq N$ (\cite[Theorem 6.1.1.]{Szg}).
\end{enumerate}
These imply that the largest zero of $P_n$ converges to $b$. We write zeros of $P_n$ by
\begin{align*}
    a<x_1^n<x_2^n<\cdots<x_n^n<b.
\end{align*}
For each $k=1,\ldots,n-1$, there exists zero $y^n_k$ of $P'_n$ in the interval $(x_k^n,x_{k+1}^n)$. Then for each $k=1,\ldots,n-2$, there exists a zero $z_k^n$ of $P''_n$ in the interval $(y_k^n,y_{k+1}^n)$. Hence $P''_n>0$ on the interval $(z^n_{n-2},\infty)$. This implies that $P'_n$ is increasing on the interval $(y_{n-1}^n,\infty)$. Therefore $P_n$ is convex on the interval $(x_n^n,\infty)$. Then we have
\begin{align*}
    P'_n(b)&>\dfrac{P_n(b)-P_n(x_n^n)}{b-x_n^n}\\
    &=\dfrac{1}{b-x_n^n}\\
    &\to\infty.
\end{align*}
as $n\to\infty$. 
\end{proof}
Let us find the distribution $\mu$ of our polynomial $P_n$ (or $Q_n)$ which appeared in Subsection \ref{AW}. By the orthogonality relations, the $u^n_{[a],[a]}$'s, which are polynomials in $u^1_{[a],[a]}$, are orthogonal with respect to the Haar state $\Phi$ of $SU_q(2)$. Note that $\mathbb{C}[u^1_{[a],[a]}]\subset C^{\ast}\{1,u^1_{[a],[a]}\}\simeq C(\sigma(u^1_{[a],[a]}))$. By the Riesz--Markov--Kakutani theorem, any state on the algebra $C(\sigma(u^1_{[a],[a]}))$ arises as an integration with respect to a probability measure over $\sigma(u^1_{[a],[a]})$. Hence the restriction of the Haar state $\restr{\Phi}{\mathbb{C}[u^1_{[a],[a]}]}$ arises as an integration with respect to some probability measure $\mu$. It follows from the faithfulness of the Haar state $\Phi$ on $\mathcal{O}_q(SU(2))$ that $\supp(\mu)=\sigma(u^{1}_{[a],[a]})$. Then for all $\epsilon>0$, we have $\mu((1-\epsilon,1])\neq0$. Then we get:
\begin{thm}\label{proper}
Our $1$-cocycle $C$ is proper. In particular, $C$ is not a coboundary.
\end{thm}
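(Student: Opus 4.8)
The plan is to combine the growth computation of Theorem~\ref{growth} with the convexity estimate of Lemma~\ref{Arano-sensei}, feeding in the measure-theoretic data assembled in the paragraph preceding the statement. Recall that properness means $C^{n*}C^{n}\to\infty$ as $n\to\infty$. By Theorem~\ref{growth} we have $C^{n*}C^{n}\asymp P'_{n}(1)$ with a positive constant $\tfrac{2\{a+2\}}{\{a\}^{2}\{a+1\}}$ in front (positive since $q^{c}+q^{-c}>0$ for all real $c$), so it suffices to prove that $P'_{n}(1)\to\infty$, where $(P_{n})_{n}$ is the orthogonal polynomial sequence with $u^{n}_{[a],[a]}=P_{n}(u^{1}_{[a],[a]})$.

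First I would pin down the measure and its normalization. The elements $u^{n}_{[a],[a]}$ are mutually orthogonal with respect to the Haar state $\Phi$, and since they are polynomials in the single self-adjoint element $u^{1}_{[a],[a]}$, the restriction $\restr{\Phi}{\mathbb{C}[u^{1}_{[a],[a]}]}$ is integration against a probability measure $\mu$ on $\sigma(u^{1}_{[a],[a]})$ by Riesz--Markov--Kakutani; hence $(P_{n})_{n}$ is orthogonal with respect to $\mu$. The evaluation point $1$ is exactly the normalization point of Lemma~\ref{Arano-sensei}: the affine change of variable defining $P_{n}$ from $Q_{n}$ sends $X=1$ to $(q+q^{-1})/2$, so $P_{n}(1)=Q_{n}((q+q^{-1})/2)=1$, matching the hypothesis $P_{n}(b)=1$ with $b=1$.

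Next I would check that $b=1$ is genuinely the right endpoint of $\supp(\mu)$ and that mass accumulates there. Since $U_{1}$ is a unitary corepresentation, $\|u^{1}_{[a],[a]}\|\le 1$, so $\sigma(u^{1}_{[a],[a]})\subseteq[-1,1]$ is a bounded real interval; faithfulness of $\Phi$ gives $\supp(\mu)=\sigma(u^{1}_{[a],[a]})$, and the paragraph before the statement shows $\mu((1-\epsilon,1])\neq 0$ for every $\epsilon>0$. As $\mu\ge 0$ this yields $\mu([1-\epsilon,1])>0$, and since $\supp(\mu)$ is closed we conclude $1=\max\supp(\mu)$. With these hypotheses verified I would apply Lemma~\ref{Arano-sensei} with $b=1$ to obtain $P'_{n}(1)\to\infty$, whence $C^{n*}C^{n}\to\infty$ and $C$ is proper. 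Finally, a proper cocycle is in particular unbounded, so by the boundedness criterion for coboundaries recorded in the remark following Proposition~\ref{char} it cannot be a coboundary.

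The step I expect to require the most care is not the analytic core (that is already isolated in Lemma~\ref{Arano-sensei}) but the transport of data: one must be sure that the orthogonality of the matrix coefficients under the faithful Haar state really does produce a genuine \emph{probability} measure on a \emph{bounded real} interval, and that the normalization point $1$ coincides with the \emph{right edge} of the spectrum with positive mass in every one-sided neighbourhood of it. Only once $b=1=\max\supp(\mu)$, $P_{n}(1)=1$, and $\mu([1-\epsilon,1])>0$ are simultaneously confirmed does Lemma~\ref{Arano-sensei} apply verbatim, and the whole argument reduces to this bookkeeping together with the growth identity of Theorem~\ref{growth}.
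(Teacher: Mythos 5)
Your proposal is correct and follows essentially the same route as the paper: the paper's proof is literally ``Apply Lemma~\ref{Arano-sensei},'' relying on the preceding paragraph's construction of the orthogonality measure $\mu$ via the Haar state and Riesz--Markov--Kakutani, combined with Theorem~\ref{growth}. Your additional bookkeeping --- checking $P_n(1)=Q_n((q+q^{-1})/2)=1$, that $\sigma(u^1_{[a],[a]})\subseteq[-1,1]$, and that $1=\max\supp(\mu)$ --- merely makes explicit the hypotheses of Lemma~\ref{Arano-sensei} that the paper leaves implicit.
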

\begin{proof}
Apply Lemma \ref{Arano-sensei}.
\end{proof}

\end{document}